\newtheorem{thm}{Theorem}[section]
\newtheorem{lemma}[thm]{Lemma}
\newtheorem{cor}[thm]{Corollary}
\newtheorem{thmintro}{Theorem}
\theoremstyle{definition}
\newtheorem{rmk}[thm]{Remark}
\newtheorem{ex}[thm]{Example}
\DeclareMathOperator{\Ends}{Ends}
\DeclareMathOperator{\Isom}{Isom}
\newcommand{\LL}{\mathcal{L}}
\newcommand{\Z}{\mathbb{Z}}
\newcommand{\R}{\mathbb{R}}
\newcommand{\N}{\mathbb{N}}
\DeclareMathOperator{\arcsinh}{arcsinh}
\newcommand{\st}{\;|\;}
\newcommand{\ssm}{\smallsetminus}
\DeclareMathOperator{\is}{Is}
\title{Isospectral hyperbolic surfaces of infinite genus}
\author{Federica Fanoni}
\address{CNRS, Univ Paris Est Creteil, Univ Gustave Eiffel, LAMA, F-94010 Creteil, France}
\email{federica.fanoni@math.cnrs.fr}
\date{\today}
\begin{document}

\begin{abstract}We show that any infinite-type surface without planar ends admits arbitrarily large families of length isospectral hyperbolic structures. If the surface has infinite genus and its space of ends is self-similar, we construct an uncountable family of isospectral and quasiconformally distinct hyperbolic structures.
\end{abstract}
\maketitle

\section{Introduction}
Given a hyperbolic surface, its length spectrum is the collection of lengths of primitive closed geodesics, counted with multiplicities. Surfaces with the same length spectrum are called isospectral.

The study of isospectrality for hyperbolic surfaces of finite type (i.e.\ with finitely generated fundamental group) has a long history. It was first shown by Huber \cite{huber_analytischen} and M\"uller \cite{mueller_spectral} that the length spectrum determines the topology of the surface and by McKean \cite{mckean_selberg} and M\"{u}ller \cite{mueller_spectral} that there can be only finitely many surfaces isospectral to a given one. Wolpert \cite{wolpert_length} proved that \emph{generically} the length spectrum determines the hyperbolic structure: if $\mathcal{V}_g$ denotes the subset of Teichm\"{u}ller space of a closed surface of genus $g\geq 2$ given by hyperbolic structures $X$ such that there exists a hyperbolic structure $Y$ isospectral (and non-isometric) to $X$, then $\mathcal{V}_g$ is a proper real analytic subvariety of Teichm\"uller space. In \cite{buser_isospectral}, Buser showed that $\dim \mathcal{V}_g>0$ if $g=5$ or $g\geq 7$.

Lots of effort has been put into building families of isospectral hyperbolic surfaces, and we refer the interested reader to Gordon's survey \cite[Chapter 6]{dv_handbook} for references to these results. We just mention here the construction of the largest known family: in \cite{bgg_mutually}, Brooks, Gornet and Gustafson proved that there are infinitely many genera $g$ for which there exist a family of pairwise isospectral closed hyperbolic surfaces of genus $g$ of size growing like $g^{c\log(g)}$ (where $c$ is a universal constant).

On the other hand, in the case of a closed surface of genus $g\geq 2$, Buser \cite[Chapter 13]{buser_geometry} gave an upper bound, depending only on $g$, on the cardinality of a family of isospectral hyperbolic structures on the surface. Buser's bound was improved by Parlier \cite{parlier_interrogating}, who proved that the cardinality of such a family does not exceed $g^{Cg}$ (for a universal constant $C$). See also Ungemach \cite{ungemach_bound} for related work.

When it comes to surfaces of infinite type, very little is known about the length spectrum. The first striking difference with the finite-type case is that the length spectrum of a hyperbolic surface of infinite type does not need to be discrete: it is easy to construct examples of infinite-type hyperbolic surfaces with infinitely many primitive closed geodesics of length bounded by a constant. Basmajian and Kim \cite{bk_geometrically} gave necessary and sufficient conditions for an infinite-type hyperbolic surface to have discrete spectrum and constructed infinite dimensional family of quasiconformally distinct hyperbolic structures having a discrete (respectively, non-discrete) length spectrum.

The goal of this paper is to show that infinite-type hyperbolic surfaces have large families of isospectral hyperbolic structures. Given a hyperbolic surface $X$, we denote by $\is(X)$ the cardinality of the set of hyperbolic surfaces isospectral to $X$ (where hyperbolic surfaces are considered up to isometry). The first result we prove is the following:

\begin{thmintro}\label{thm:largefamilies}
Let $S$ be an infinite-type surface without planar ends. Then for every $n\in N$ there is an infinite-dimensional family of hyperbolic structures $X$ on $S$ with $\is(X)\geq n$.
\end{thmintro}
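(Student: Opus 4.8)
The plan is to install a \emph{localized isospectral gadget} inside $S$. I would write $S = Z \cup_{\gamma} W$, where $W$ is a compact subsurface with totally geodesic boundary $\gamma$ and $Z = S \ssm W$ carries all of the topological complexity (the infinite genus and the full end space). The idea is then to replace $W$ by one of $n$ pairwise non-isometric but topologically identical pieces $W_1,\dots,W_n$, chosen so that the surfaces $Y_i := Z \cup_\gamma W_i$ are pairwise isospectral and all homeomorphic to $S$. Homeomorphism to $S$ is automatic once the $W_i$ share the genus and boundary count of $W$, so the entire difficulty is concentrated in making the replacement invisible to the length spectrum.

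For the gadget I would use the Sunada/transplantation mechanism, the same technology underlying the large finite-type families of Brooks--Gornet--Gustafson cited above. Fix a finite group $G$ together with subgroups $H_1,\dots,H_n$ that are pairwise \emph{Gassmann-equivalent} (each $G$-conjugacy class meets every $H_i$ in the same number of elements) but pairwise non-conjugate; such families exist for arbitrarily large $n$, for instance by taking iterated products of a single Gassmann triple. Choosing a compact base surface $W_0$ with a surjection $\pi_1(W_0)\to G$, I set $W_i := H_i\backslash\widetilde{W}$, where $\widetilde W\to W_0$ is the associated regular $G$-cover. The Gassmann condition produces a length-preserving bijection between the primitive closed geodesics of the $W_i$, and this is what forces $Y_1,\dots,Y_n$ to be isospectral. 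I would arrange the covers so that the $H_i$ have equal index and matching peripheral data, so that the $W_i$ are mutually homeomorphic, each carry a single boundary geodesic, and admit isometric collar neighborhoods of $\partial W_i$; this lets $Z$ be glued to every $W_i$ by the identical isometry.

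The infinite-dimensional family then comes essentially for free. Every hyperbolic structure on the base $B := Z \cup_\gamma W_0$ pulls back to a structure on each $Y_i$, and the Sunada bijection is valid structure by structure, so isospectrality persists under deformation. Since $B$ is of infinite type, $\teich(B)$ is infinite-dimensional, and distinct base structures give generically non-isometric $Y_1$, yielding an infinite-dimensional family of structures on $S$. To conclude $\is(X)\ge n$ I must also ensure the $Y_i$ are pairwise \emph{non-isometric}, not merely isospectral. As in Wolpert's genericity result, the locus where two of them become isometric should be proper and nowhere dense (an isometry $Y_i\cong Y_j$ holding on a full-dimensional family would force an unexpected conjugacy between $H_i$ and $H_j$), so restricting to its complement leaves an infinite-dimensional family on which all $n$ surfaces are pairwise distinct.

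The main obstacle I anticipate is the \emph{relative} isospectrality across the gluing curve $\gamma$: unlike the closed finite-type case, a closed geodesic of $Y_i$ may cross $\gamma$ arbitrarily many times, weaving between the common exterior $Z$ and the gadget $W_i$. Making the transplantation compatible with a single fixed exterior requires the Gassmann correspondence to respect the peripheral structure along $\partial W_i$ --- concretely, a length- and endpoint-preserving bijection between geodesic arcs of the $W_i$ with endpoints on $\gamma$, valid for every structure. Establishing this boundary-compatible transplantation, equivalently realizing the whole configuration as intermediate covers of a common surface so that all closed geodesics correspond to $G$-conjugacy classes globally, is the technical heart of the argument; the role of the hypothesis that $S$ has no planar ends is to supply both the infinite genus needed to house the gadget and the topological bookkeeping that keeps each $Y_i$ homeomorphic to $S$.
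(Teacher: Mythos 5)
Your construction has a genuine gap, and it is precisely the one you flag at the end: the ``localized gadget'' strategy does not yield isospectrality of the glued surfaces $Y_i = Z\cup_\gamma W_i$. A length-preserving bijection between the closed geodesics \emph{contained in} $W_i$ and those contained in $W_j$ (which is what the Gassmann condition gives you for the pieces) says nothing about the closed geodesics of $Y_i$ that cross $\gamma$, and these form the bulk of the spectrum. To transplant such a geodesic you would need, for every hyperbolic structure in your family, a bijection between geodesic arcs of $W_i$ and of $W_j$ with endpoints on $\gamma$ that preserves length, endpoints \emph{and} angles of incidence --- far stronger than Gassmann equivalence, and not something a generic Sunada pair satisfies. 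Realizing ``the whole configuration as intermediate covers of a common surface,'' as you suggest, is incompatible with keeping a single copy of the exterior $Z$: if $Y_i$ were an intermediate cover of some base, $Z$ would have to appear $[G:H_i]$ times over its image, not once. So the sentence ``this is what forces $Y_1,\dots,Y_n$ to be isospectral'' is unproved and, as stated, false in general. Your non-isometry step is also soft: Wolpert's genericity theorem concerns real-analytic subvarieties of finite-dimensional Teichm\"uller spaces of closed surfaces and does not transfer to this infinite-type setting without a separate argument.

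The paper sidesteps the localization problem by making the \emph{entire} surface the Sunada cover. Using the Aougab--Patel--Vlamis construction, one builds a single infinite-type hyperbolic surface $X$ homeomorphic to $S$ carrying a free isometric action of $G^n$ (with $G$ the order-$32$ group of Example \ref{ex:almostconjugate}) such that every intermediate quotient $X/K_\psi$, $K_\psi=\prod_{i=1}^n H_{\psi(i)}$, is again homeomorphic to $S$; this is exactly where ``infinite type without planar ends'' is used, since only such surfaces admit finite covers homeomorphic to themselves with the required behavior on ends. Theorem \ref{thm:sunada} then applies to all closed geodesics at once --- no boundary matching is needed --- and gives $2^n$ isospectral structures. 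Non-isometry is established by an explicit argument locating short geodesics crossed by a distinguished curve $\alpha_i$ and using that $\{gh_1,gh_2\}\not\subseteq H_2g$, and the infinite-dimensional family is obtained by perturbing the pants-curve lengths inside isolating intervals, which preserves the whole setup. If you want to pursue your route, you would have to prove the boundary-compatible transplantation you describe, which is a substantially harder (and different) statement than Sunada's theorem.
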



With different requirements on the topology of the base surface we can construct much larger isospectral families. Moreover we show that, under these assumptions, not only these surfaces are not determined, up to isometry, by their length spectrum, but that they are not even determined up to quasiconformal equivalence.

\begin{thmintro}\label{thm:selfsimilar}
Let $S$ be an infinite-genus surface with self-similar endspace. Then there is an infinite-dimensional family of hyperbolic structures $X$ on $S$ such that:
\begin{enumerate}[(i)]
\item $\is(X)$ is uncountable, and
\item uncountably many surfaces in $\is(X)$ are pairwise not quasiconformal to each other.
\end{enumerate}
\end{thmintro}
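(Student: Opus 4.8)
The plan is to leverage Theorem~\ref{thm:selfsimilar}'s self-similarity hypothesis to build, out of a single well-chosen ``isospectral gadget,'' an uncountable family of hyperbolic structures that all share the same length spectrum yet differ quantitatively. The starting point should be a finite-type building block: by the classical finite-type theory (Buser, and the transplantation/Sunada-type constructions referenced via \cite{bgg_mutually,dv_handbook}), there exist pairs of compact hyperbolic surfaces with boundary that are \emph{isospectral relative to their boundary}, meaning one can glue them interchangeably into a larger surface along the boundary geodesics without changing the length spectrum of the result. I would fix such a pair $(P_0, P_1)$ of ``pants-like'' pieces with the same boundary lengths and the same relative length spectrum, together with the gluing compatibility (matching boundary collar geometry and twist conventions) that guarantees interchangeability.

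First I would use the self-similarity of the endspace together with infinite genus to produce a locally finite decomposition of $S$ into infinitely many pieces, a positive-density subcollection of which are topologically copies of the building block $P_0$ (so that each may be replaced by $P_1$). Self-similarity is exactly what lets me embed infinitely many disjoint copies of the relevant end-pattern into the surface, so that swapping pieces preserves the homeomorphism type of $S$; I would make this precise by choosing a pants-type decomposition adapted to an exhaustion, in which countably many of the pieces are the interchangeable gadget. Next, to each infinite binary string $s \in \{0,1\}^{\N}$ I associate the hyperbolic structure $X_s$ obtained by inserting $P_{s_i}$ in the $i$-th marked location and a fixed reference geometry elsewhere. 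Because the building block is isospectral-rel-boundary, each local swap preserves the global length spectrum: the length of any primitive closed geodesic either avoids the swapped region or crosses it through matching boundary data, and the transplantation argument shows the multiplicities of lengths are preserved in the infinite-type setting as well, so all $X_s$ are mutually isospectral, giving part~(i) with $\is(X)$ of cardinality $2^{\aleph_0}$.

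For part~(ii), I would distinguish uncountably many of the $X_s$ up to quasiconformal equivalence by attaching to each marked location a geometric invariant that is \emph{not} determined by the length spectrum but \emph{is} a quasiconformal invariant --- for instance arranging the two pieces $P_0,P_1$ to differ in a boundary-collar modulus or in the conformal modulus of an annular neighborhood, so that a quasiconformal map between $X_s$ and $X_{s'}$ would have to match infinitely many such moduli. Since a quasiconformal map has a bounded dilatation $K$, and one can make the modulus discrepancy between $P_0$ and $P_1$ bounded away from a $K$-compatible range in infinitely many independent locations, strings $s,s'$ differing in an infinite, suitably spread-out set cannot be related by any single $K$. A counting/pigeonhole argument over all $K \in \N$ then isolates an uncountable antichain of strings giving pairwise non-quasiconformal structures.

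The main obstacle, and the step deserving the most care, is establishing that the isospectrality of the finite building block truly survives the passage to the infinite-type surface with its possibly non-discrete length spectrum. In the compact case, isospectrality is usually certified via the heat trace or the Selberg trace formula, neither of which is available verbatim here; instead I expect to need a direct, combinatorial matching of primitive closed geodesics that passes through the transplantation (piecewise-isometry) map between the assembled surfaces, controlling both lengths and multiplicities simultaneously even when infinitely many geodesics accumulate at a given length. Making this matching uniform across all $X_s$, and compatible with the local interchangeability, is the technical heart; verifying the quasiconformal distinctness in part~(ii) is comparatively routine once the moduli are set up to be independent and bounded away from each other.
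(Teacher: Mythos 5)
Your proposal hinges on the existence of a pair $(P_0,P_1)$ of compact hyperbolic pieces with boundary that are ``isospectral relative to their boundary'' in a strong enough sense that swapping $P_0$ for $P_1$ \emph{independently at each of infinitely many locations} never changes the length spectrum of the ambient surface. This is precisely the hard part of the theorem, and you have assumed it rather than proved it. The classical Sunada/Buser/B\'erard constructions you cite do not supply such a pair: there the two isospectral surfaces arise from one set of identical tiles glued according to two \emph{global} combinatorial schemes (cosets of $H_1$ versus cosets of $H_2$), and the transplantation map permutes tiles and boundary components globally. For a local swap to preserve the lengths of geodesics that cross the swapped piece, the transplantation would have to restrict to the identity on the boundary so that it extends by the identity to the complement; no such pair is produced in your argument, and I do not know of one in the literature. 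The paper avoids this entirely: it realizes every $X_\psi$, $\psi\in\{1,2\}^{\N}$, as a quotient $X/K_\psi$ of a single infinite cover $X$ (built from the Aougab--Patel--Vlamis vertex/edge construction over the Cayley graph of $G^\infty=\prod G$) by the subgroup $K_\psi=\prod H_{\psi(i)}$. The product structure of $G^\infty$ is what makes the binary choices genuinely independent, and isospectrality is then a coset-counting transplantation argument (the paper's Lemmas on $F(\Lambda,\gamma)$ and almost conjugacy), carried out type-by-type on primitive geodesics, with extra care for closedness, primitivity and multiplicity --- none of which is reducible to a local ``isospectral-rel-boundary'' statement.

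Your part~(ii) also has a quantitative gap. A $K$-quasiconformal map is a \emph{pointwise} bound on dilatation, so infinitely many independent locations each exhibiting a modulus (or length) discrepancy bounded by a fixed constant do \emph{not} obstruct the existence of a single quasiconformal map; one $K$ absorbs all of them simultaneously. What is needed --- and what the paper arranges --- is that along the infinitely many indices where $\psi_1$ and $\psi_2$ differ, the forced length distortions of short geodesics are \emph{unbounded}: the short lengths are chosen to be $\arcsinh(1)/2^{m^2}$, so any two distinct values differ by a multiplicative factor that eventually exceeds every $K$, and Wolpert's inequality then rules out any quasiconformal map. Combined with a maximal uncountable family of binary sequences pairwise differing in infinitely many places, this gives the uncountable non-quasiconformal subfamily. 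Your pigeonhole idea points in this direction but, as written, the ``bounded away from a $K$-compatible range'' hypothesis cannot hold for all $K$ at once without the unboundedness that is the actual content of the argument.
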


We refer to Section \ref{sec:prelim-topology} for the definition of self-similarity. We just note that, up to homeomorphism, there are uncountably many infinite-genus surfaces with self-similar endspace (see \cite[Section 4.2]{apv_isometry}). Simple examples are the Loch Ness monster (the surface with a single end, that is nonplanar) and the blooming Cantor tree (the surface without planar ends, whose space of ends is a Cantor set).

We also remark that recently Baik, Choi and Kim \cite{bck_simple} studied the \emph{simple} length spectrum (the multiset of lengths of simple closed geodesics) of surfaces of finite- and infinite-type. They proved that \emph{generically} (i.e.\ away from a meagre subset of Teichm\"uller space) hyperbolic surfaces are determined by their simple length spectrum. Note that, in contrast with the case of the length spectrum, it is not known whether there are pairs of non-isometric hyperbolic surfaces with the same simple length spectrum.

\subsection{Tools for the proofs}
To prove isospectrality in Theorem \ref{thm:largefamilies} we will rely on Sunada's criterion \cite{sunada_riemannian} (see also \cite{buser_geometry}, for a statement not requiring compactness). Sunada's criterion holds for a large class of manifolds and for both the spectrum of the Laplacian and the length spectrum, but for our purposes we will only need the following (see Section \ref{sec:groups} for the definition of almost conjugate subgroups):

\begin{thm}[Sunada (\cite{sunada_riemannian})]\label{thm:sunada}
Let $M$ be a complete hyperbolic surface. Suppose $G$ is a finite group acting on $M$ by isometries with finitely many fixed points. If $H_1$ and $H_2$ are almost conjugate subgroups of $G$, acting on $M$ without fixed points, the quotient surfaces $M/H_1$ and $M/H_2$ are isospectral.
\end{thm}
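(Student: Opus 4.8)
The plan is to use the standard fact, recalled in Section \ref{sec:groups}, that $H_1$ and $H_2$ being almost conjugate in $G$ is equivalent to the isomorphism of permutation representations $\mathbb{Q}[G/H_1]\cong\mathbb{Q}[G/H_2]$, equivalently to the statement that for every $g\in G$ the permutations of $G/H_1$ and of $G/H_2$ induced by $g$ have the same number of fixed points. Since the defining condition $|C\cap H_1|=|C\cap H_2|$ holds for \emph{every} conjugacy class $C$ of $G$, it applies in particular to the class of each power $g^m$, so a fixed $g$ has the same number of fixed points on $G/H_1$ as on $G/H_2$ after being raised to any power. My goal is to translate the length spectrum of $M/H_i$ into a statement about the cycle structure of these permutations, and then feed in this fixed-point information.

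For the geometric reduction I would use that, since $H_i$ acts freely, $M\to M/H_i$ is a Riemannian covering and $M/H_i$ is a hyperbolic surface, while $M/H_i\to M/G$ is a finite covering of degree $[G:H_i]$ (branched only over the finitely many images of the fixed points of $G$), with monodromy given by the natural action of $G$, the deck group of $M\to M/G$ in the orbifold sense, on the fiber $G/H_i$. Take a primitive closed geodesic $\bar\delta$ on $M/G$ avoiding the cone points, with monodromy element $g\in G$ (well defined up to conjugacy). By elementary covering-space theory the components of the preimage of $\bar\delta$ in $M/H_i$ are in bijection with the orbits (cycles) of $\langle g\rangle$ acting on $G/H_i$, and a cycle of length $k$ produces a single \emph{primitive} closed geodesic of $M/H_i$ of length $k\cdot\ell(\bar\delta)$, covering $\bar\delta$ with degree $k$; conversely every closed geodesic upstairs projects to a power of a unique primitive geodesic downstairs. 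Thus the length spectrum of $M/H_i$ is the disjoint union, over primitive closed geodesics $\bar\delta$ of $M/G$, of the multiset $\{\,k\cdot\ell(\bar\delta) : k \text{ a cycle length of } g\curvearrowright G/H_i,\text{ counted with the number of such cycles}\,\}$.

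To conclude I would compare these contributions geodesic by geodesic. For a fixed primitive $\bar\delta$ the monodromy $g$ has, by the first paragraph, the same number of fixed points on $G/H_1$ and on $G/H_2$ after raising to every power $m$; since the number of points lying in cycles of length dividing $m$ equals the number of fixed points of $g^m$, Möbius inversion shows that $g$ has the same cycle type on $G/H_1$ as on $G/H_2$. Hence the two multisets of lengths coming from $\bar\delta$ coincide, and summing over all $\bar\delta$ gives that $M/H_1$ and $M/H_2$ have the same length spectrum.

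The main obstacle, and the point I would treat most carefully, is bookkeeping rather than geometry: I must make all of the above an honest bijection of multisets, valid even though $M$ is of infinite type, so that the length spectrum may be non-discrete and individual lengths may occur with infinite multiplicity. The covering/monodromy description does exactly this and is completely insensitive to compactness, which is precisely why Sunada's construction survives in infinite type. Two technical points need attention: the fixed points of $G$ make $M/G$ an orbifold, so I should either restrict the correspondence to geodesics whose projection avoids the finitely many cone points (and match the rest group-theoretically, in terms of conjugacy classes in the preimages $\Gamma_{H_i}$ of $H_i$), and I should verify that the correspondence "cycle $\leftrightarrow$ primitive geodesic" really does respect primitivity. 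I note that Sunada's original argument instead compares heat kernels, using that the trace $\sum_{h\in H}\int_{M/H}K_M(t,x,hx)\,dx$ only involves the $G$-class function $g\mapsto\int K_M(t,x,gx)\,dx$ together with the numbers $|C\cap H|$; that route also works, but its convergence and trace-class hypotheses are cleanest under compactness, so for infinite-type surfaces the direct length-spectrum argument above is preferable.
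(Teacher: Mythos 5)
The paper does not prove this statement: Theorem \ref{thm:sunada} is quoted from Sunada, with \cite{sunada_riemannian} and \cite{buser_geometry} as references, and is used as a black box in Section \ref{sec:largefamilies}. So the comparison is really with the standard proofs and with the machinery the paper builds elsewhere. Your sketch is the combinatorial, length-spectrum proof by transplantation of geodesics (Buser, B\'erard) rather than Sunada's heat-kernel trace argument, and your reason for preferring it --- that the trace-formula route is delicate without compactness, while the covering/monodromy bookkeeping is insensitive to it --- is exactly right for this paper's setting. It is also, in substance, the same machinery the paper deploys in Section \ref{sec:selfsimilar} to handle the countably infinite isometry group where Theorem \ref{thm:sunada} cannot be invoked: your ``number of fixed points of $g$ on the coset space'' is the quantity $|F(\Lambda,\gamma)|$ of Lemma \ref{lem:almostconjugate}, and your M\"obius-inversion step matching cycle types is the content of Lemma \ref{lem:almostconjugate+}. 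The fixed-point identity for all powers of $g$, the deduction of equal cycle types, and the correspondence ``cycle of length $k$ over a primitive geodesic of length $\ell$ downstairs $\leftrightarrow$ one primitive geodesic of length $k\ell$ upstairs'' (including your check that primitivity is preserved) are all correct.

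The one step you should not leave as stated is the treatment of the finitely many fixed points of $G$. Restricting the monodromy picture to geodesics of $M/G$ whose image avoids the cone points does not dispose of a negligible set: infinitely many closed geodesics can pass through a single point, so the parenthetical ``match the rest group-theoretically'' is carrying real weight and must be executed. The clean repair is to run the entire argument group-theoretically from the start: write $M=\mathbb{H}^2/\Gamma$, let $\tilde G$ be the group of all lifts of $G$ to $\mathbb{H}^2$ (so $\tilde G/\Gamma\cong G$), and let $\Gamma_{H_i}$ be the preimage of $H_i$, so that $M/H_i=\mathbb{H}^2/\Gamma_{H_i}$ since $H_i$ acts freely. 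Primitive closed geodesics of $M/H_i$ correspond to $\Gamma_{H_i}$-conjugacy classes of primitive hyperbolic elements, and for each $\tilde G$-conjugacy class of hyperbolic elements the number of $\Gamma_{H_i}$-classes it splits into, together with the translation lengths, is controlled by the fixed-point counts of the powers of its image in $G$ acting on the coset space --- exactly your cycle-type computation, but now with no case distinction for cone points (elliptic elements of $\tilde G$ never lie in $\Gamma_{H_i}$ because $H_i$ acts without fixed points, so they contribute nothing). With that substitution your argument is a complete and correct proof of the statement.
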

To construct the covering surface we will rely on the construction of infinite-genus hyperbolic surfaces with a given (finite) isometry group due to Aougab, Patel and Vlamis \cite{apv_isometry}.

We will use the same construction also for the proof of Theorem \ref{thm:selfsimilar}. The main difference with Theorem \ref{thm:largefamilies} is that these surfaces admit hyperbolic structures with a \emph{countably infinite} isometry group, as shown by Aougab, Patel and Vlamis \cite{apv_isometry}. While this will allow us to construct infinite isospectral families, it will also mean that we won't be able to apply Sunada's result directly. Instead, we will show isospectrality  by applying the \emph{transplantation of geodesics} technique, first introduced by Buser in \cite{buser_isospectral} (see also \cite{berard_transplantation}).

\section*{Acknowledgements}
The author would like to thank Bram Petri and Nick Vlamis for useful conversations.

\section{Preliminaries}\label{sec:preliminaries}

Surfaces will be assumed to be orientable and connected and, unless otherwise stated, they will have no boundary. If they do, the boundary is a union of compact components. Surfaces are \emph{of finite type} if their fundamental group is finitely generated and \emph{of infinite type} otherwise.

\subsection{Topology}\label{sec:prelim-topology}
Surfaces without boundary are topologically classified by their genus and the pair of topological spaces $(\Ends(S),\Ends_g(S))$, where $\Ends(S)$ is the space of ends of the surface and $\Ends_g(S)$ is the (closed) subspace of nonplanar ends, as shown by Ker\'ekj\'art\'o \cite{kerekjarto_vorlesungen} and Richards \cite{richards_classification}. We refer to Aramayona and Vlamis' survey \cite{av_big} for definitions and properties of these objects.

We will say that an end is \emph{accumulated by boundary components} if any open neighborhood of the end in the Freudenthal compactification $S\cup \Ends(S)$ contains boundary components. We denote by $\Ends_\partial(S)$ the set of ends accumulated by boundary components. It is not difficult to deduce from the classification of surfaces without boundary that two surfaces $S$ and $S'$ with compact boundary components are homeomorphic if and only if they have the same genus and the same number of boundary components and there is a homeomorphism $f:\Ends(S)\to\Ends(S')$ sending $\Ends_g(S)$ to $\Ends_g(S')$ and $\Ends_\partial(S)$ to $\Ends_\partial(S')$.

We will be interested in the subclass of infinite-type surfaces whose space of ends is self-similar -- a condition first introduced by Mann and Rafi \cite{mr_large}. We say that a surface has \emph{self-similar} space of ends if for every decomposition
$$\Ends(S)=U_1\sqcup\dots\sqcup U_n$$
into (pairwise disjoint) clopen subsets $U_i$, there is $i\in\{1,\dots, n\}$ and an open $A\subset U_i$ such that
$$(A,A\cap \Ends_g(S))\simeq (\Ends(S),\Ends_g(S)).$$
Among surfaces without planar ends, any surface whose space of ends is either a Cantor set or of the form $\omega^\alpha+1$, for some countable ordinal $\alpha$, has self-similar endspace.

In \cite[Theorem 5.2]{apv_isometry}, Aougab, Patel and Vlamis showed that self-similarity is equivalent to radial symmetry, where the space of ends of a surface is said to have \emph{radial symmetry} if it is either a single point or there is $x\in\Ends(S)$ such that
$$\Ends(S)\ssm\{x\}\simeq \bigsqcup_{n\in N}E_n,$$
where the $E_n$ are (pairwise disjoint) noncompact subsets of $\Ends(S)$ such that for every $n,m\in N$
$$(E_n,E_n\cap\Ends_g(S))\simeq (E_m,E_m\cap\Ends_g(S)).$$
The point $x$ is called a \emph{star point} of $\Ends(S)$. Note that for every $n$, the closure $\overline{E}_n$ of $E_n$ is $E_n\cup\{x\}$.

\subsection{Hyperbolic geometry}\label{sec:prelim-geometry}
A \emph{hyperbolic surface} $X$ is a surface endowed with a hyperbolic metric (a Riemannian metric of constant curvature $-1$), which we require to be complete and \emph{of the first kind} (i.e.\ equal to its convex core). Recall that the \emph{convex core} $C(X)$ of a hyperbolic surface $X$ is the smallest closed convex subsurface with boundary which has the same homotopy type as $X$. We denote by $d_X$, or simply $d$ when no confusion can arise, the hyperbolic distance on $X$.

Let $\alpha$ and $\beta$ be two closed geodesics on $X$ and $k\geq 2$ an integer. We say that $\beta$ is the \emph{$k$-fold iterate} of $\alpha$ if, for suitable parametrizations $\alpha:S^1\to X$ and $\beta: S^1\to X$, $\beta(t)=\alpha(kt)$, where $S^1=\R/\Z$. A closed geodesic $\alpha$ is \emph{primitive} if there is no $k\geq 2$ such that $\alpha$ is the $k$-fold iterate of another closed geodesic. A closed curve $\alpha$ is \emph{essential} if it is not homotopic to a point or to a simple closed curve bounding a once-punctured disk. If $\alpha$ is an essential closed curve on a hyperbolic surface $X$, it has a unique geodesic representative in its homotopy class. We denote by $\ell_X(\alpha)$, or simply $\ell(\alpha)$ when no confusion can arise, the length of the geodesic representative of $\alpha$ on $X$.

The \emph{length spectrum} of a hyperbolic surface $X$ is the collection $\LL(X)$ of all lengths of primitive closed geodesics, counted with multiplicity. Two hyperbolic surfaces $X$ and $Y$ are \emph{isospectral} if $\LL(X)=\LL(Y)$ (as multisets).

The following is a fundamental result about hyperbolic surfaces (Keen \cite{keen_collars}; see also \cite[Chapter 4]{buser_geometry}):

\begin{thm}[Collar lemma, Keen (\cite{keen_collars})]\label{lem:collar}
Let $X$ be a hyperbolic surface and $\alpha$ a simple closed geodesic. Then the neighborhood of $\alpha$ given by
$$\mathcal{C}(\alpha)=\{p\in X\st d(p,\alpha)<w(\alpha)\},$$
where
$$w(\alpha)=\arcsinh\left(\frac{1}{\sinh(\ell(\alpha)/2)}\right),$$
is an embedded cylinder.
\end{thm}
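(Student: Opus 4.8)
The plan is to pass to the universal cover and recognise $\mathcal{C}(\alpha)$ as the quotient of a tubular neighbourhood of a single lift of $\alpha$. Write $X=\mathbb{H}^2/\Gamma$ with $\Gamma$ a torsion-free Fuchsian group, lift $\alpha$ to its axis $A\subset\mathbb{H}^2$, and let $g\in\Gamma$ be the hyperbolic element with axis $A$ and translation length $\ell=\ell(\alpha)$. Writing $U=\{z\in\mathbb{H}^2\st d(z,A)<w(\alpha)\}$ for the open metric tube of radius $w(\alpha)$ around $A$, this tube is $\langle g\rangle$-invariant, so $\mathcal{C}(\alpha)=\pi(U)$ is the image of the cylinder $U/\langle g\rangle$ under the covering map $\pi$. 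Thus the statement reduces to showing that the locally isometric map $U/\langle g\rangle\to X$ is injective, i.e.\ is an embedding of a cylinder.

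First I would record the standard equivalence. Since $\Gamma$ acts freely, $\pi(z_1)=\pi(z_2)$ with $z_1,z_2\in U$ forces $z_2=hz_1$ for some $h\in\Gamma$, and injectivity of $U/\langle g\rangle\to X$ fails exactly when some $h\in\Gamma\ssm\langle g\rangle$ satisfies $hU\cap U\neq\emptyset$. As $h$ is an isometry, $hU$ is the tube of radius $w(\alpha)$ around the geodesic $hA$, so (for disjoint lifts, which admit a common perpendicular) $hU\cap U\neq\emptyset$ is equivalent to $d(A,hA)<2w(\alpha)$. Consequently the collar is an embedded cylinder if and only if every lift $hA\neq A$ of $\alpha$ satisfies $d(A,hA)\ge 2w(\alpha)$; equivalently, the shortest geodesic arc meeting $\alpha$ perpendicularly at both endpoints has length at least $2w(\alpha)$.

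At this point simplicity of $\alpha$ enters: distinct lifts $hA$ and $A$ are disjoint geodesics, since a transverse intersection of two lifts would project to a self-intersection of $\alpha$, so each such pair has a well-defined common perpendicular. It remains to prove the quantitative separation $d(A,hA)\ge 2w(\alpha)$, and this is the main obstacle. The key is a hyperbolic-trigonometric computation: taking a nearest lift $A'=hA$, its common perpendicular to $A$, and exploiting that the translations of length $\ell$ along $A$ and along $A'$ must keep the whole family of lifts pairwise disjoint, one is led to a right-angled quadrilateral with one side of length $\ell/2$ and another of length $\tfrac12 d(A,A')$. The degenerate (ideal-vertex) case of this quadrilateral is governed precisely by the identity $\sinh(w(\alpha))\sinh(\ell/2)=1$, which is exactly the defining relation of $w(\alpha)$, and it yields $\tfrac12 d(A,A')\ge w(\alpha)$. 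I expect this sharp estimate — extracting the factor $2$, which does \emph{not} follow from disjointness of a single pair of lifts (that alone only gives $d(A,A')\ge w(\alpha)$) but requires the full invariance under the translation $g$ — to be the delicate point; it is the content of Keen's theorem. By contrast, local injectivity of the normal exponential map is automatic in negative curvature, where geodesics perpendicular to $A$ diverge, so it causes no difficulty.
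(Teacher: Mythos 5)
First, a point of reference: the paper does not prove this statement at all --- it is quoted from Keen (with Buser's book as a secondary reference) --- so there is no in-paper argument to compare yours against, and I can only assess the proposal on its own terms. Your covering-space reduction is correct and is the standard first half of the argument: writing $X=\mathbb{H}^2/\Gamma$, lifting $\alpha$ to the axis $A$ of a hyperbolic element $g$ of translation length $\ell$, and observing that $\mathcal{C}(\alpha)$ is an embedded cylinder if and only if $d(A,hA)\geq 2w(\alpha)$ for every $h\in\Gamma\ssm\langle g\rangle$, equivalently if and only if every geodesic arc meeting $\alpha$ perpendicularly at both endpoints has length at least $2w(\alpha)$. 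Two small points in this reduction still need justification: that the stabilizer of $A$ in $\Gamma$ is exactly $\langle g\rangle$ (this uses that a simple closed geodesic is primitive and that $\Gamma$ is discrete and torsion-free, not just that $h\notin\langle g\rangle$), and that two distinct lifts of $\alpha$ cannot be asymptotic at infinity, so that a common perpendicular exists and $d(A,hA)>0$; the latter again uses discreteness, not simplicity alone.

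The genuine gap is the second half. The inequality $\sinh\bigl(\tfrac{1}{2}d(A,hA)\bigr)\sinh\bigl(\tfrac{\ell}{2}\bigr)\geq 1$, i.e.\ $d(A,hA)\geq 2w(\alpha)$, \emph{is} the collar lemma; everything preceding it is routine covering-space bookkeeping. You describe the intended trigonometric argument only in outline (``one is led to a right-angled quadrilateral with one side of length $\ell/2$ and another of length $\tfrac{1}{2}d(A,A')$'') and then explicitly defer the estimate to Keen. As written there is no identified quadrilateral: you would need to specify which four geodesic segments bound it, why three of its angles are right angles, which trirectangle identity (e.g.\ $\cos\varphi=\sinh a\,\sinh b$) yields the inequality, and precisely where the pairwise disjointness of the translated lifts $g^{n}hA$ enters --- this last point is where the sharp constant comes from, as you correctly note. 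Until that computation is carried out, the proposal is an accurate reduction plus an appeal to the theorem being proved. If you want a complete and checkable route, Buser's is the cleanest: extend $\alpha$ to a pants decomposition and verify by right-angled-hexagon trigonometry that in a hyperbolic pair of pants with boundary lengths $\ell_1,\ell_2,\ell_3$ the half-collars of widths $w(\ell_i)$ are embedded and pairwise disjoint.
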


We will repeatedly use, without explicit mention, the following consequence of the collar lemma:
\begin{cor}
Let $X$ be a hyperbolic surface. Two distinct simple closed geodesics of length less than $2\arcsinh(1)$ are disjoint.
\end{cor}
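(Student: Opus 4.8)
The plan is to argue by contradiction, using the collar lemma (Theorem~\ref{lem:collar}) to show that a short simple closed geodesic has such a wide embedded collar that no second short geodesic can fit across it.

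First I would record the numerical consequence of the length bound. If $\ell(\alpha)<2\arcsinh(1)$ then $\ell(\alpha)/2<\arcsinh(1)$, so $\sinh(\ell(\alpha)/2)<1$ and therefore
$$w(\alpha)=\arcsinh\!\left(\frac{1}{\sinh(\ell(\alpha)/2)}\right)>\arcsinh(1).$$
Thus every simple closed geodesic of length $<2\arcsinh(1)$ has collar half-width strictly greater than $\arcsinh(1)$.

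Now suppose, for contradiction, that two distinct simple closed geodesics $\alpha\neq\beta$, both of length $<2\arcsinh(1)$, meet at a point $p$. By the collar lemma, $\mathcal{C}(\alpha)=\{q\st d(q,\alpha)<w(\alpha)\}$ is an embedded cylinder, and the core $\alpha$ is the unique closed geodesic contained in its closure; since $\beta\neq\alpha$, the curve $\beta$ is not contained in $\overline{\mathcal{C}(\alpha)}$. Let $\sigma$ be the connected component of $\beta\cap\overline{\mathcal{C}(\alpha)}$ containing $p$. Then $\sigma$ is a compact subarc of $\beta$, properly contained in $\beta$, whose two endpoints lie on $\partial\mathcal{C}(\alpha)$ (where $d(\cdot,\alpha)=w(\alpha)$) and which passes through $p\in\alpha$ (where $d(\cdot,\alpha)=0$). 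Because $d(\cdot,\alpha)$ is $1$-Lipschitz along $\sigma$, the subarc from either endpoint to $p$ has length at least $w(\alpha)$, so $\ell(\sigma)\geq 2w(\alpha)$. As $\sigma$ is a proper subarc of $\beta$, this yields $2w(\alpha)\leq\ell(\sigma)<\ell(\beta)<2\arcsinh(1)$, hence $w(\alpha)<\arcsinh(1)$, contradicting the estimate above.

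The only steps requiring any care are the two standard facts invoked in the last paragraph: that the core is the unique closed geodesic in an embedded hyperbolic collar (which forces $\beta$ to leave $\mathcal{C}(\alpha)$, so that $\sigma$ genuinely reaches $\partial\mathcal{C}(\alpha)$ and is a proper subarc), and that the distance-to-$\alpha$ function is $1$-Lipschitz (which gives $\ell(\sigma)\geq 2w(\alpha)$ regardless of whether $\sigma$ crosses the collar or merely dips in and exits on the same side). Both are routine, so I expect the width estimate and the correct choice of the component $\sigma$ through $p$ to be the only load-bearing ingredients, with no real obstacle beyond assembling them.
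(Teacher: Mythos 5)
Your proof is correct and is the standard collar-width argument: the paper states this corollary without proof (deferring implicitly to the collar lemma and \cite[Chapter 4]{buser_geometry}), and your computation that $w(\alpha)>\arcsinh(1)$ together with the $1$-Lipschitz distance estimate across the collar is exactly the intended justification. The one point worth phrasing carefully is the claim that $\beta\not\subseteq\overline{\mathcal{C}(\alpha)}$, which follows because a closed geodesic contained in the embedded cylinder would be freely homotopic there to a power of the core and hence, by uniqueness of geodesic representatives, coincide with $\alpha$; you note this and it is indeed routine.
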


A \emph{pants decomposition} of a surface $S$ is a collection of simple closed curves $\mathcal{P}$ such that $S\ssm \mathcal{P}$ is a union of \emph{pairs of pants}, that is, surfaces homeomorphic to a sphere with three boundary components and/or punctures. Note that if a surface has boundary components, we won't consider them as part of the pants decomposition. Given a pants decomposition $\mathcal{P}$ of a surface $S$, to specify a hyperbolic structure on $S$ it is enough to assign to every $\gamma\in \mathcal{P}$ a length $l_\gamma>0$ and a twist parameter $t_\gamma\in \R$. If the surface has boundary, we also need to assign lengths to its boundary components. These are called \emph{Fenchel--Nielsen parameters} and we refer to \cite[Chapter 3]{buser_geometry} for their precise definition.

Given two hyperbolic surfaces $X$ and $Y$ and $K\geq 1$, a homeomorphism $\theta:X\to Y$ is said to be \emph{$K$-quasiconformal} if it is differentiable almost everywhere and, where the derivatives are defined, we have:
$$|\theta_{\bar{z}}|\leq\frac{K-1}{K+1}|\theta_z|.$$

Two hyperbolic surfaces are \emph{quasiconformal} if there is some $K\geq 1$ and a $K$-quasiconformal homeomorphism between them. We recall the following consequence of $K$-quasiconformality, due to Wolpert \cite{wolpert_length}: 

\begin{thm}[Wolpert (\cite{wolpert_length})]
Let $\theta:X_1\to X_2$ be a $K$-quasiconformal map between hyperbolic surfaces. Then for every simple closed geodesic $\gamma$ in $X_1$ we have:
$$\frac{1}{K}\leq \frac{\ell_{X_2}(\theta(\gamma))}{\ell_{X_1}(\gamma)}\leq K.$$
\end{thm}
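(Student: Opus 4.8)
The plan is to deduce the inequality from the quasiconformal distortion of moduli of annuli, via the cyclic covers associated with $\gamma$ and $\theta(\gamma)$. First I would lift $\theta$ to the universal covers. Writing $X_i=\mathbb{H}/\Gamma_i$ for Fuchsian groups $\Gamma_i$ (so $\mathbb{H}$ is the upper half-plane), the map $\theta$ lifts to a $K$-quasiconformal homeomorphism $\tilde\theta:\mathbb{H}\to\mathbb{H}$ conjugating $\Gamma_1$ to $\Gamma_2$. The geodesic $\gamma$ is represented by a primitive hyperbolic element $g_1\in\Gamma_1$ whose translation length is exactly $\ell_{X_1}(\gamma)$, and its image $g_2=\tilde\theta g_1\tilde\theta^{-1}\in\Gamma_2$ represents the free homotopy class of $\theta(\gamma)$, so its translation length is $\ell_{X_2}(\theta(\gamma))$.

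Next I would pass to the annular covers $A_i=\mathbb{H}/\langle g_i\rangle$, i.e.\ the covers of $X_i$ associated with the cyclic subgroups $\langle g_i\rangle$. Each $A_i$ is conformally an annulus whose modulus can be computed explicitly: conjugating $g_i$ so that its axis is the imaginary axis, $g_i$ becomes $z\mapsto e^{\ell_i}z$ with $\ell_i$ its translation length, and the map $z\mapsto\exp(2\pi i\log z/\ell_i)$ identifies $A_i$ with a round annulus of modulus $\pi/\ell_i$. Since $\tilde\theta$ conjugates $\langle g_1\rangle$ to $\langle g_2\rangle$, it descends to a $K$-quasiconformal homeomorphism $A_1\to A_2$.

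Finally I would invoke the classical Gr\"otzsch modulus inequality: a $K$-quasiconformal homeomorphism between annuli distorts the modulus by at most a factor $K$, so $\tfrac{1}{K}\operatorname{mod}(A_1)\le\operatorname{mod}(A_2)\le K\operatorname{mod}(A_1)$. Substituting $\operatorname{mod}(A_i)=\pi/\ell_i$ and rearranging yields $\tfrac{1}{K}\ell_{X_1}(\gamma)\le\ell_{X_2}(\theta(\gamma))\le K\ell_{X_1}(\gamma)$, which is exactly the claimed estimate.

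The lift to $\mathbb{H}$ and the conjugation of the Fuchsian groups are standard for quasiconformal maps between hyperbolic surfaces, and the essential analytic input, the modulus distortion bound for annuli, is classical; so the only steps deserving real care are the identity $\operatorname{mod}(A_i)=\pi/\ell_i$ and the verification that $\tilde\theta$ genuinely descends to the cyclic covers, which is precisely what reduces the whole statement to a single Gr\"otzsch-type inequality rather than a surface-wide extremal length estimate.
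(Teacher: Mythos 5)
Your argument is correct. Note that the paper does not prove this statement at all: it is quoted as Theorem~2.4 with a citation to Wolpert, and used as a black box, so there is no internal proof to compare against. What you give is the standard proof of Wolpert's lemma, and all the key steps check out: the lift $\tilde\theta$ conjugates $\Gamma_1$ to $\Gamma_2$ and hence descends to a $K$-quasiconformal homeomorphism of the intermediate annular covers $A_i=\mathbb{H}/\langle g_i\rangle$; the computation $\operatorname{mod}(A_i)=\pi/\ell_i$ is right (with $g_i:z\mapsto e^{\ell_i}z$, the map $z\mapsto\exp(2\pi i\log z/\ell_i)$ sends $\mathbb{H}$ onto the round annulus $\{e^{-2\pi^2/\ell_i}<|w|<1\}$ and is $\langle g_i\rangle$-invariant); and the Gr\"otzsch quasi-invariance of the modulus then gives $\ell_1/K\le\ell_2\le K\ell_1$ after inverting. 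The only point worth making explicit is that $g_2=\tilde\theta g_1\tilde\theta^{-1}$ is again hyperbolic (so that $A_2$ is a genuine annulus of finite modulus rather than a cusp neighborhood of infinite modulus); this follows because $\theta$ is a homeomorphism of the surfaces and therefore sends the non-peripheral essential curve $\gamma$ to a non-peripheral essential curve, whose class is represented by a hyperbolic element with translation length $\ell_{X_2}(\theta(\gamma))$.
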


\section{Surfaces with a given isometry (sub)group}\label{sec:constructions}
In \cite[Sections 3 and 4.2]{apv_isometry}, Aougab, Patel and Vlamis gave constructions of hyperbolic surfaces with a given isometry group $G$. Roughly speaking, the idea is to construct \emph{vertex} surfaces and \emph{edge} surfaces and glue them together according to the combinatorics given by the Cayley graph of $G$, with $G$ as generating set. For our purposes, it will be enough that the given group is a \emph{subgroup} of the group of (orientation preserving) isometries of the hyperbolic surface. This means that we will follow the same construction, but allow more freedom in the choice of hyperbolic structures on the vertex and edge surfaces.

Given a hyperbolic surface $X$, $\Isom^+(X)$ denotes the group of orientation preserving isometries of $X$.

\subsection{Finite isometry group}\label{sec:finitegroup}
Let $G$ be a countable group and $S$ an infinite-type surface without planar ends. We first give the construction of vertex and edge surfaces and then show how to glue them together to obtain the required hyperbolic surface.
\subsubsection*{Vertex surfaces}
All vertex surfaces will be copies of the same surface $V$, constructed as follows. Identify $\Ends(S)$ with a subset $E$ of the sphere $S^2$ and let $S^2_E$ be the complement of $E$ in $S^2$. Choose a basis for the first homology $H^1(S^2_E;\R)$ given by classes of pairwise disjoint simple closed curves $\{c_i\st i\in \delta\}$, where $\delta\in\omega+1$ is such that $\dim H^1(S^2_E;\R)=|\delta|$. Pick pairwise disjoint tubular neighborhoods $\nu_i$ of $c_i$ and let
$$S':=S^2_E\ssm\bigcup_{i\in\delta}\nu_i. $$
Then $S'$ is a disjoint union of connected surfaces $S_j$, $j\in J$, for some index set $J$, each homeomorphic to the plane with some disks removed. Let $\alpha_j\in\omega+1$ be such that $S_j$ has $|\alpha_j|$ boundary components.

Given $\alpha\in\omega+1$, let $Z_\alpha$ be the surface
$$Z_\alpha=\R^2\ssm \bigcup_{m\in \alpha}B\left((0,m),\frac{1}{4}\right),$$
where $B(x,r)$ is the ball of radius $r$ and center $x$. For every $j\in J$, $S_j$ is homeomorphic to $Z_{\alpha_j}$; we fix a homeomorphism $\varphi_j:Z_{\alpha_j}\to S_j$.

Fix also an injection $f$ of $G$ into $\Z$ and define $Z^G_\alpha$ to be
$$Z_\alpha^G = Z_\alpha \ssm \left( \bigcup_{h\in G} \left( \bigcup_{m \in \N} B\left( (f(h), m), \frac14\right) \right) \right)\subset Z_\alpha.$$

Topologically, $V$ is the surface obtained as

$$V=\left( \bigcup_{j\in J} \varphi_j \left(Z_{\alpha_j}^G\right) \right) \cup \left( \bigcup_{i\in \delta} \nu_i \right).$$

Let $\partial(j,h,m):=\varphi_j\left(\partial B\left( (f(h), m), \frac14\right))\right)$, for $j\in J, h\in G$ and $m\in \N$. Let
$$\lambda:J\times G\times \N\to (0,2\arcsinh(1))$$
be an injective function and assign to $\partial(j,h,m)$ the length $\lambda(j,h,m)$. Set
$$\Lambda:=\lambda(J\times G\times \N)$$
and choose pairwise distinct lengths in $(0,2\arcsinh(1))\setminus \Lambda$ for the curves in $\mathcal{P}$. Choose twist parameters for the curves in $\mathcal{P}$ freely. Set
$$P:=\{\ell(\gamma)\st\gamma\in\mathcal{P}\}.$$
\subsubsection*{Edge surfaces}
Choose an injective function $\mu: \N\to (0, 2\arcsinh(1))\setminus\left(\Lambda\cup P\right)$ and let $M:=\mu(\N)$. For every $(j,h,m)\in J\times G\times \N$, let \( E(j,h,2m) \) be a hyperbolic surface obtained by gluing together two pairs of pants, one with boundary lengths \( \lambda(j,h,2m), \mu(m), \) and \( \mu(m) \), and the other with lengths \( \lambda(j,h,2m-1), \mu(m), \) and \( \mu(m) \), with freely chosen twist parameters.
\subsubsection*{Gluing the pieces}
For every $g\in G$, let $V_g$ be a copy of $V$. For every $h\in G\ssm\{e\}$, and for every $(j,m)\in J\times \N$, take a copy of \( E(j,h,2m) \) and glue the boundary component of $E(j,h,2m)$ of length \( \lambda(j,h,2m) \) to the boundary component $\partial(j,h,2m)$ of \( V_g \) and the other boundary component to the boundary component $\partial(j,h,2m-1)$ of \( V_{gh} \).  Choose twist parameters so that there is an action of $G$ by isometries, given by rigidly permuting the vertex surfaces according to the requirement
$$h\cdot V_g=V_{gh}.$$
The resulting surface is denoted $X^G_S$. Note that, by the collar lemma, the only primitive closed geodesics of length less than $2\arcsinh(1)$ are the curves in the pants decomposition of the vertex surfaces, the boundary curves of each vertex surface and the two curves of length in $M$ in each edge surface.

We record the following properties of $X^G_S$, which are proven by Aougab, Patel and Vlamis.
\begin{thm}[Aougab--Patel--Vlamis \cite{apv_isometry}]\label{thm:propertiesofX}
Let $G$ be finite. Then $X^G_S$ is a complete hyperbolic surface of the first kind and $G$ is a subgroup of $\Isom^+(X^G_S)$, acting freely on $X^G_S$. Both $X^G_S$ and the quotient manifold $X^G_S/G$ are homeomorphic to $S$ and $G$ acts trivially on $\Ends(X^G_S)$. If $G$ is countably infinite, $X^G_S$ and $X^G_S/G$ are homeomorphic to the Loch Ness monster.
\end{thm}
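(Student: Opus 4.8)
The plan is to read the metric assertions off the gluing and the topological ones off the Ker\'ekj\'art\'o--Richards classification by genus, number of (compact) boundary components, and the pair $(\Ends,\Ends_g)$. For the metric statements, observe that $X^G_S$ is built by gluing, along closed geodesics of equal length with prescribed twists, a locally finite family of pairs of pants --- those of the decompositions $\mathcal{P}$ of the vertex surfaces and the two pants forming each edge surface --- together with the cylinders $\nu_i$; since each piece carries the hyperbolic structure fixed by its boundary lengths, the glued metric is smooth of curvature $-1$, so $X^G_S$ is hyperbolic. I would deduce completeness from local finiteness (each piece meets at most three neighbours along its boundary geodesics) together with the uniform bound $2\arcsinh(1)$ on all gluing lengths, which is the standard situation in which a pants-glued surface is complete. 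For the first-kind property I would check that no free geodesic boundary remains: for $h\neq e$ every curve $\partial(j,h,m)$ is glued to an edge surface, while the other boundary circles of the $\varphi_j(Z_{\alpha_j}^G)$ are glued to the $\nu_i$; with no free boundary and infinitely many pants accumulating at every end (so no end is a funnel), $X^G_S$ equals its convex core.

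For the isometric action, the gluing pattern is exactly the labelled Cayley graph of $G$ on the generating set $G$, on which $G$ acts freely by its regular representation, permuting the copies $V_g$. I would realise this permutation by the tautological isometries between the identical copies $V_g$ and between corresponding copies of each edge surface; the twists are chosen precisely so that these match across the gluings, giving an action by orientation-preserving isometries. The action is free because the regular action of $G$ on itself is, so no vertex or edge surface --- hence no point of $X^G_S$ --- is fixed by a nontrivial element; thus $G\hookrightarrow\Isom^+(X^G_S)$ acts freely, in particular with no fixed points, as Sunada's criterion requires.

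The crux is the topological identification. The surface $X^G_S$ has no boundary and infinite genus, since each edge surface is a genus-one piece (two pants glued along two curves) and there are infinitely many of them; $S$ too has infinite genus, having no planar ends. The decisive point is the end space: each vertex surface is planar, obtained from $S^2_E$ with $E\simeq\Ends(S)$ by deleting disks whose boundaries $\partial(j,h,m)$ accumulate onto the ends of $V$. Gluing the compact genus-one edge surfaces at these accumulating circles adds infinite genus at each end but creates no new ends, converting every planar end into a nonplanar one while leaving $\Ends(V)\simeq\Ends(S)$ unchanged; I would then verify $(\Ends(X^G_S),\Ends_g(X^G_S))\simeq(\Ends(S),\Ends(S))=(\Ends(S),\Ends_g(S))$ and conclude, by classification, that $X^G_S$ is homeomorphic to $S$. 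Checking that the accumulating handles alter $\Ends_g$ but not $\Ends$ --- especially when $\Ends(S)$ is infinite --- is the step I expect to be the main obstacle. For the quotient, the regular action is free and its quotient graph is a single vertex with one loop per edge type, so $X^G_S/G$ is a single copy of $V$ with its boundary circles reconnected by the $\nu_i$ and one representative of each edge surface; the same genus-and-end bookkeeping gives $X^G_S/G$ homeomorphic to $S$. Finally, each end of $X^G_S$ arises from an end $x$ of $V$ whose neighbourhood is $G$-invariant (there the handles link $V_g$ to every $V_{gh}$), so $G$ fixes each end and acts trivially on $\Ends(X^G_S)$.

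For $G$ countably infinite the construction is unchanged except that there are now countably many vertex surfaces, and here I would appeal to the end analysis of Aougab--Patel--Vlamis, by which the infinitely many interconnecting handles merge the ends into a single nonplanar end, so that $X^G_S$ --- being infinite-genus and one-ended --- is the Loch Ness monster, and likewise $X^G_S/G$. This end computation is the delicate point in the infinite case.
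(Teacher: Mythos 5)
The paper does not actually prove this statement: it is imported wholesale from Aougab--Patel--Vlamis \cite{apv_isometry}, the only added content being the remark that freeness of the $G$-action, while not stated explicitly there, is implicit in their work. So your proposal is not competing with an argument in the paper but with the cited source. The metric part of your sketch is sound: the pieces are compact hyperbolic pairs of pants with geodesic boundary, the gluing is locally finite, the cuff lengths are bounded above by $2\arcsinh(1)$ so each cuff carries a collar of definite width, hence a finite-length path crosses only finitely many cuffs and completeness follows; with no free geodesic boundary the surface is of the first kind. The equivariance and freeness argument is essentially right, though ``no edge surface is fixed'' needs one more line for elements $h$ of order two: the edge surface joining $V_g$ to $V_{gh}$ along $\partial(j,h,2m)\subset V_g$ is sent by $h$ to the \emph{other} copy, joining $V_{gh}$ to $V_g$ along $\partial(j,h,2m)\subset V_{gh}$, so it is moved rather than preserved setwise.

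The genuine gap is exactly where you anticipate it, and it is not merely a bookkeeping nuisance. Your claim that gluing the genus-one edge surfaces ``adds infinite genus at each end but creates no new ends, leaving $\Ends(V)\simeq\Ends(S)$ unchanged'' treats the operation as local to a single vertex surface, but it is not: every edge surface has one foot in $V_g$ and the other in $V_{gh}$, so any neighbourhood of an end of $V_g$ inside $X^G_S$ necessarily spills through infinitely many edge surfaces into all the other vertex surfaces. One must actually prove (i) that the $|G|$ copies of each end of $V$ are identified to a single end of $X^G_S$ (this is also the content of the assertion that $G$ acts trivially on $\Ends(X^G_S)$, which you invoke rather than derive), (ii) that the infinite chains of edge surfaces create no additional ends, and (iii) that the resulting end space, with every end now nonplanar, is homeomorphic to $\Ends(S)$ --- and then repeat the analysis for the quotient $X^G_S/G$ and, in the countably infinite case, for the collapse to a single end. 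You explicitly defer all of this to \cite{apv_isometry}. That is a legitimate citation, and it mirrors what the paper itself does, but it means your proposal does not independently establish the one part of the theorem that is not routine.
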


Note that, while Aougab, Patel and Vlamis don't state explicitly that the $G$-action is free, it is implicit in their work.






\subsection{Countably infinite isometry group}\label{sec:infinitegroup}
Suppose now that $S$ is an infinite-genus surface with self-similar endspace. As mentioned in Section \ref{sec:prelim-topology}, if $|\Ends(S)|\neq 1$, self-similarity is equivalent to saying that there is a star poin $x\in \Ends(S)$ and a collection of pairwise disjoint noncompact subsets $E_n\subset\Ends(S)$ such that
$$\Ends(S)\ssm\{x\}=\sqcup_{n\in \N}E_n$$
and for every $n,m\in \N$
$$(E_n,E_n\cap\Ends_g(S))\simeq (E_n,E_n\cap\Ends_g(S)).$$
We remark that $x$ is a nonplanar end in this case: since $S$ has infinite genus, there is at least one nonplanar end $y$; if $y=x$ we are done, otherwise $y\in E_n$ for some $n$ and thus the orbit of $y$ contains $x$ as a limit point, so $x$ is also nonplanar\footnote{This argument was communicated to the author by Nick Vlamis.}.

Given a countably infinite group $G$ and $S$ a surface of infinite genus with self-similar endspace and different from the Loch Ness monster, Aougab, Patel and Vlamis construct a hyperbolic surface $Y^G_S$ with isometry group $G$ with a similar argument as in the finite group case. The main modification is in the definition of the vertex surfaces. We will follow the same construction, again allowing for more general hyperbolic structures.

\subsubsection*{Vertex surfaces}
The vertex surface $V$ is a surface such that:
\begin{itemize}
\item $(\Ends(V),\Ends_g(V))\simeq (\overline{E}_1,\overline{E}_1\cap\Ends_g(S))$,
\item $V$ has countably many boundary components, indexed by $G\times \N$ (and denoted, similarly to before, $\partial(g,m)$, for $g\in G$ and $m\in \N$), which don't accumulate anywhere in the surface and such that the only end accumulated by boundary components is the point corresponding to $x$.
\end{itemize}
As in the finite group case, pick an injective function $\lambda:G\times\N\to (0,2\arcsinh(1))$ and let $\partial(g,m)$ have length $\lambda(g,m)$. Let $\Lambda:=\lambda(G\times\N)$. Fix a collection $\mathcal{P}$ of curves in $V$ which form a pants decomposition and assign them pairwise distinct lengths in $(0,2\arcsinh(1))\setminus \Lambda$. Fix a hyperbolic structure by choosing the twist parameters freely.
\subsubsection*{Edge surfaces}
The edge surface $E(h,2m)$, for $h\in G$ and $m\in\N$, is obtained by gluing together two pairs of pants, the first with boundary lengths $\lambda(h,2m),\mu(m),\mu(m)$ and the second with boundary lengths $\lambda(h,2m-1), \mu(m),\mu(m)$, where $\mu$ is a fixed injective function $\N\to (0,2\arcsinh(1))\setminus(\Lambda\cup P)$. Let $M:=\mu(\N)$.
\subsubsection*{Gluing the pieces}
For every $g\in G$, let $V_g$ be a copy of $V$, and as in the previous section, for every $g, h\in G$, $h\neq e$, and $m\in \N$, we glue a copy of $E(h,2m)$ to the boundary component $\partial(h,2m)$ of $V_g$ and to the boundary component $\partial(h,2m-1)$ of $V_{gh}$. Let $Y^G_S$ be the resulting surface, where twist parameters are chose so that there is an action of $G$ on $Y^G_S$ by isometries satisfying
$$h\cdot V_g=V_{gh}.$$

We have:
\begin{thm}[Aougab--Patel--Vlamis (\cite{apv_isometry})]
The surface $Y^G_S$ is of the first kind and homeomorphic to $S$. Furthermore, $G$ is a subgroup $\Isom^+(Y^G_S)$ and the action of $G$ is free.
\end{thm}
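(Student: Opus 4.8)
The plan is to verify the four assertions in turn: that $Y^G_S$ is a complete hyperbolic surface of the first kind, that it is homeomorphic to $S$, that $G$ embeds in $\Isom^+(Y^G_S)$, and that this action is free. Completeness and the fact that $Y^G_S$ carries a genuine hyperbolic metric are immediate from the construction, since it is assembled by gluing pairs of pants — those making up each $V_g$ together with those forming the edge surfaces — along their boundary geodesics using the prescribed Fenchel–Nielsen data. For ``first kind'' I would argue that every simple closed geodesic arising from this decomposition has pairs of pants on both sides, so the gluing creates no funnels; since each pair of pants with geodesic boundary is its own convex core and convexity is preserved under gluing along geodesics, a complete hyperbolic surface built entirely by gluing pants along all of their cuffs equals its convex core, exactly as in the finite-group case.

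The homeomorphism type I would pin down via the classification of surfaces: it suffices to check that $Y^G_S$ has infinite genus and that $(\Ends(Y^G_S),\Ends_g(Y^G_S))\simeq(\Ends(S),\Ends_g(S))$. Infinite genus is clear, since each $V_g$ already has infinite genus (its end space contains the nonplanar point corresponding to $x$). The key observation for the end space is that a compact subset of $Y^G_S$ meets only finitely many vertex and edge surfaces: within a single $V_g$ the boundary components escape to the end $x_g$, so their distance to a fixed basepoint tends to infinity, and hence a metric ball can cross only finitely many edges and reach only finitely many neighboring vertices. Consequently the ends of $Y^G_S$ split into two kinds: the ends of each $V_g$ other than its star point — these are unaffected by the gluing and contribute a copy of $E_1$ for every $g\in G$, so $\bigsqcup_{g\in G}E_1\simeq\bigsqcup_{n\in\N}E_n=\Ends(S)\ssm\{x\}$ — and the ends approached through the star points $x_g$ together with the directions escaping to infinity in the Cayley graph.

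The main obstacle is to show that this second family consists of a single nonplanar end corresponding to $x$. I would establish this by showing that for every compact $K\subset Y^G_S$ there is exactly one complementary component that is not eventually contained in a neighborhood of an end in some $E_1$: since $K$ touches only finitely many pieces and between any two vertices there are infinitely many parallel edges, one can always route a path avoiding $K$ from a neighborhood of $x_g$ in one vertex to that of another, so this component is connected, and these components nest down to a single end $x^*$. That $x^*$ is nonplanar follows because every neighborhood of it contains all but finitely many of the $V_g$, each of infinite genus; and one checks that its accumulation behavior — every neighborhood contains all but finitely many of the $E_1$-clusters — matches the relation $\overline{E}_n=E_n\cup\{x\}$, so the identification $x^*\leftrightarrow x$ together with $\bigsqcup_g E_1\leftrightarrow\bigsqcup_n E_n$ is a homeomorphism of pairs respecting the nonplanar loci. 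By Kerékjártó and Richards, $Y^G_S\cong S$.

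Finally, for the group action I would set $h\cdot V_g=V_{gh}$, extended over the edge surfaces by the corresponding relabeling; this is an orientation-preserving isometry precisely because all $V_g$ are isometric copies of $V$, all edges of a given type are isometric, and the twist parameters were chosen $G$-equivariantly. The assignment $h\mapsto(h\cdot)$ is an injective homomorphism since right multiplication on $G$ is faithful, giving $G\le\Isom^+(Y^G_S)$. For freeness I would note that right multiplication by $h\neq e$ fixes no vertex, so no nontrivial element preserves a vertex surface; the only potential difficulty is that an involution $h$ could preserve the unordered pair of endpoints of an edge, but tracking the gluing data $\partial(h,2m)\leftrightarrow\partial(h,2m-1)$ shows that such an $h$ merely interchanges the two families of parallel edges between $V_g$ and $V_{gh}$ without preserving any single one. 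Hence no piece is fixed by a nontrivial element, and since every point lies in some piece, the action has no fixed points.
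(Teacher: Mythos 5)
The paper does not prove this statement: it is imported verbatim from Aougab--Patel--Vlamis \cite{apv_isometry}, so there is no in-paper argument to compare yours against. Judged on its own, your reconstruction follows the natural (and, as far as one can tell, the original) route: completeness and the first-kind property from the pants gluing, the homeomorphism type from the Ker\'ekj\'art\'o--Richards classification via an analysis of $(\Ends(Y^G_S),\Ends_g(Y^G_S))$, and the $G$-action from the equivariant choice of Fenchel--Nielsen data. The outline is sound, and your treatment of freeness is actually cleaner than you make it sound: since the edge glued between $V_g$ and $V_{gh'}$ carries the asymmetric labels $\partial(h',2m)$ at $V_g$ and $\partial(h',2m-1)$ at $V_{gh'}$, the image of this edge under $h$ has its ``$2m$'' end at $V_{gh}\neq V_g$, so no edge surface is preserved regardless of whether $h$ is an involution; the worry about swapped endpoints never arises. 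The one place where your argument is an assertion rather than a proof is the identification of the end space: you correctly isolate that all the star points $x_g$ and the escape-to-infinity directions in the (complete) Cayley graph coalesce into a single nonplanar end $x^*$, and that the remaining ends form $\bigsqcup_{g\in G}E_1$, but checking that the resulting bijection $x^*\mapsto x$, $\bigsqcup_g E_1\mapsto\bigsqcup_n E_n$ is a \emph{homeomorphism of pairs} (i.e.\ that neighborhood bases of $x^*$ and of $x$ correspond, which is exactly where the relation $\overline{E}_n=E_n\cup\{x\}$ and radial symmetry enter) is the technical heart of the matter; this is the content of \cite[Lemma 4.8]{apv_isometry}, which the present paper also invokes later for the quotients $X_\psi$. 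So: correct in outline, with the decisive topological step cited in spirit rather than carried out.
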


\section{Almost conjugate subgroups}\label{sec:groups}

Let $G$ be a finite group. Two subgroups $H_1$ and $H_2$ of $G$ are \emph{almost conjugate} if for every $g\in G$:
$$|[g]\cap H_1|=|[g]\cap H_2|,$$
where $[g]$ is the conjugacy class of $g$ in $G$. Conjugate subgroups are almost conjugate, but the converse doesn't hold. Moreover, given two almost conjugate subgroups $H_1$ and $H_2$ of a finite group $G$, we can get large families of almost conjugate subgroups by looking at the direct product of groups $G^m=G\times\dots\times G$ of $m$ copies of $G$: indeed, for every choice of function $$\varphi:\{1,\dots, m\}\to\{1,2\},$$ the subgroups
$H_\varphi=\prod_{i=1}^m H_{\varphi(i)}$ are pairwise almost conjugate (see \cite[Section 12.6]{buser_geometry}).

We will use a specific example of finite group with almost conjugate subgroups, described in \cite[Example 11.2.2]{buser_geometry}.
\begin{ex}\label{ex:almostconjugate}
Let $G=(\Z/8\Z)^*\ltimes \Z/8\Z$, where $(a,b)\cdot(a',b')=(aa',ab'+b)$, and consider the subgroups
$$H_1=\{e=(1,0),h_1=(3,0),h_2=(5,0),(7,0)\}$$
and
$$H_2=\{(1,0),h_3=(3,4),h_4=(5,4),(7,0)\}.$$
Then $H_1$ and $H_2$ are almost conjugate, but not conjugate. Moreover, since $h_1$ and $h_2$ generate $H_1$, for every $g\in G$, $\{gh_1,gh_2\}\not\subseteq H_2g$. Similarly, for every $g\in G$, $\{gh_3,gh_4\}\not\subseteq H_1g$
\end{ex}
\section{Proof of Theorem \ref{thm:largefamilies}}\label{sec:largefamilies}
This section is dedicated to the proof of Theorem \ref{thm:largefamilies}. Fix an infinite-type surface $S$ without planar ends. We first construct a single family of $2^n$ isospectral, non-isometric hyperbolic structures on $S$. We will discuss how to get an infinite dimensional space of such families at the end of the section.

\subsection{A family of $2^n$ isospectral surfaces}
Let $G,H_1,H_2,h_1$ and $h_2$ be as in Example \ref{ex:almostconjugate}. Let $n\in \N$ and consider $G^n$. For every function $\psi:\{1,\dots, n\}\to\{1,2\}$ consider the subgroup $K_\psi:=\prod_{i=1}^nH_{\psi(i)}$. Denote by $\iota_i:G\to G^n$ the homomorphism identifying $G$ with the $i$-th factor in $G$ (i.e.\ $\iota_i(g)$ is the vector where all entries are $e$, except for the $i$-th, which is $g$).

Let $X=X^{G^n}_S$ be constructed as in Section \ref{sec:finitegroup}, with the following extra condition on $\mathcal{P}$: pick $j_0\in J$ and assume that $\mathcal{P}$ contains curves $q_{1,i},q_{2,i},q_{3,i}$, for every $i\in\{1,\dots,n\}$, where
\begin{itemize}
\item $q_{1,i}$ forms a pair of pants with $\partial(j_0,\iota_i(h_1),2)$ and $\partial(j_0,\iota_i(h_1),1)$,
\item $q_{2,i}$ forms a pair of pants with $\partial(j_0,\iota_i(h_2),2)$ and $\partial(j_0,\iota_i(h_2),1)$,
\item $q_{3,i}$ forms a pair of pants with $q_{1,i}$ and $q_{2,i}$.
\end{itemize}
Set $X_\psi:=X/K_\psi$. Since by Theorem \ref{thm:propertiesofX} $G^n$ acts freely on $X$ and trivially on the space of ends, the $X_\psi$ are all hyperbolic surfaces homeomorphic to $S$.

Note that $X_\psi$ is obtained by gluing vertex surfaces and edge surfaces and that the vertex surfaces correspond to the left cosets of $K_\psi$ in $G^n$. Let $\nu_\psi\subset X_\psi$ be the multicurve given by all geodesics of length in $M$.

As the $K_\psi$ are pairwise almost conjugate, by Theorem \ref{thm:sunada} the $X_\psi$ form a family of isospectral surfaces. So we only have to prove that no two $X_\psi$ are isometric. To this end, consider $X_{\psi_1}$ and $X_{\psi_2}$, for $\psi_1\neq \psi_2$. Let $i\in\{1,\dots, n\}$ be such that $\psi_1(i)\neq \psi_2(i)$. Without loss of generality, assume $\psi_1(i)=1$ and $\psi_2(i)=2$.

Look at the vertex surface in $X_{\psi_1}$ corresponding to the coset $K_{\psi_1}$. By our assumptions, there is a simple closed geodesic $\alpha_i$ which intersects:
\begin{itemize}
\item $\partial(j_0,\iota_i(h_1),2),\partial(j_0,\iota_i(h_1),1),\partial(j_0,\iota_i(h_2),2),\partial(j_0,\iota_i(h_2),1)$, each once,
\item $\mathcal{P}$ only in $q_{1,i}$ and $q_{2,i}$, twice each,
\item $\nu_{\psi_1}$ twice,
\end{itemize}
and such that $\alpha_i\ssm \nu_{\psi_1}$ is contained in a single component of $X_{\psi_1}\ssm \nu_{\psi_1}$. See Figure \ref{fig:curve} for an example of such a curve.

\begin{figure}[h!]
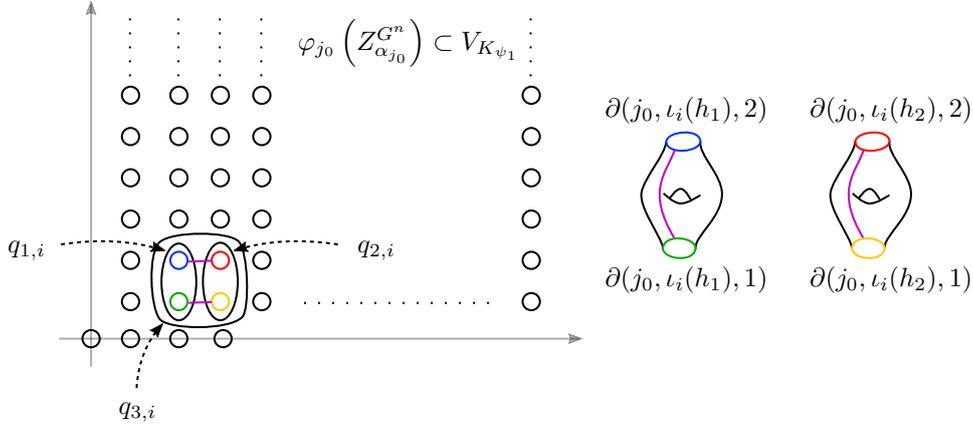

\begin{center}
\begin{overpic}{curve}
\put(28,40){$\varphi_{j_0}\left(Z_{\alpha_{j_0}}^{G^n}\right)\subset V_{K_{\psi_1}}$}
\put(64,32){$\partial(j_0,\iota_i(h_1),2)$}
\put(64,12){$\partial(j_0,\iota_i(h_1),1)$}
\put(88,32){$\partial(j_0,\iota_i(h_2),2)$}
\put(88,12){$\partial(j_0,\iota_i(h_2),1)$}
\put(-6,16){$q_{1,i}$}
\put(35,16){$q_{2,i}$}
\put(7,-3){$q_{3,i}$}
\end{overpic}
\vspace{.3cm}
\caption{A schematic picture of the curve $\alpha_i$, in purple, where the surfaces to the right are copies of $E(j_0,\iota_i(h_1),2)$ and $E(j_0,\iota_i(h_2),2)$, glued to $V_{K_{\psi_1}}$ along the curves with matching colors.}\label{fig:curve}
\end{center}
\end{figure}

Suppose by contradiction that there is an isometry $\theta:X_{\psi_1}\to X_{\psi_2}$. Let $\beta=\theta(\alpha_i)$. Note that $\theta(\nu_{\psi_1})=\nu_{\psi_2}$. So, by the condition on $\alpha_i\ssm\nu_{\psi_1}$, $\beta\ssm\nu_{\psi_2}$ is contained in a single component of $X_{\psi_2}\ssm \nu_{\psi_2}$. This means that $\beta$ needs to intersect once each curve 
$$\partial(j_0,\iota_i(h_1),2),\partial(j_0,\iota_i(h_1),1),\partial(j_0,\iota_i(h_2),2),\partial(j_0,\iota_i(h_2),1)$$ in a vertex surface of $X_{\psi_2}$. Let $g=(g_1,\dots,g_n)\in G^n$ be such that the vertex surface intersecting $\beta$ corresponds to the coset $K_{\psi_2}g$. The curves
$$\partial(j_0,\iota_i(h_1),2),\partial(j_0,\iota_i(h_1),1),\partial(j_0,\iota_i(h_2),2),\partial(j_0,\iota_i(h_2),1)$$
in $V_{K_{\psi_2}}$ need to be connected in pairs by edge surfaces $E(j_0,\iota_i(h_1),2)$ and $E(j_0,\iota_i(h_2),2)$. This means that
$$K_{\psi_2}g\iota_i(h_1)=K_{\psi_2}g$$
and
$$K_{\psi_2}g\iota_i(h_2)=K_{\psi_2}g.$$
In particular, looking at the $i$-th component, we need to have
$H_2g_i=H_2g_i h_1$ and $H_2g_i=H_2g_i h_2$, i.e.\ $g_ih_1,g_ih_2\in H_2g_i$, which is impossible.
So the surfaces are pairwise non-isometric.
\subsection{An infinite-dimensional space}\label{sec:infinitedim}
Let $X^{G^n}_S$ be a surface obtained as in the previous section, with the extra condition that the lengths of the curves in $\mathcal{P}$ are isolated in the set (not \emph{multi}set) of lengths of primitive closed geodesics in $X^{G^n}_S$. This can be done since we know which curves have length less than $2\arcsinh(1)$ and we have a lot of freedom in choosing these lengths.

Then we can find pairwise disjoint open intervals $(\gamma)\in I_\gamma\subset (0,2\arcsinh(1))$, for $\gamma\in\mathcal{P}$, such that:
\begin{itemize}
\item $\ell_{X^{G^n}_S}(\gamma)\in I_\gamma$, and
\item if $\delta$ is a primitive closed geodesic with $\ell_{^{G^n}_S}(\delta)\in I_\gamma$, then $\ell_{X^{G^n}_S}(\delta)=\ell_{X^{G^n}_S}(\gamma)$.
\end{itemize}
 
We can vary the metric of $X^{G^n}_S$ by varying simultaneously, for every $\gamma\in\mathcal{P}$, the lengths of all copies of $\gamma$ (one per vertex surface), staying in the interval $I_\gamma$. For any two $\gamma_1,\gamma_2\in\mathcal{P}$ we can vary the lengths independently, so we get an infinite-dimensional family of hyperbolic structures on $X^{G^n}_S$. By taking the quotients by the $K_\psi$ we obtain the desired infinite-dimensional family of surfaces.

\section{Proof of Theorem \ref{thm:selfsimilar}}\label{sec:selfsimilar}
The goal of this section is to prove Theorem \ref{thm:selfsimilar}. Consider $G, H_1, H_2, h_1, h_2, h_3$ and $h_4$ as in Example \ref{ex:almostconjugate}. Let $G^\infty$ be the direct product of countably many copies of $G$ and for every function $\psi: \N\to\{1,2\}$ let $K_\psi$ be the subgroup $K_\psi=\prod_{k=1}^{\infty}H_{\psi(i)}$. Denote by $\iota_i$ the identification of $G$ with the $i$-th factor in $G^\infty$.

If $S$ is the Loch Ness monster (i.e.\ the surface with a single end, which is nonplanar), let $X$ be the surface constructed as in Section \ref{sec:finitegroup}. Note that in this case the vertex surface is homeomorphic to $Z^{G^\infty}_1$ and the boundary components are indexed by $G^\infty\times \N$, so we denote them simply by $\partial(h,m)$, for $h\in G^\infty$ and $m\in \N$. Then it follows from Theorem \ref{thm:propertiesofX} that $X_\psi:=X/K_\psi$ is a Loch Ness monster for every $\psi$.

If $S$ is an infinite-genus surface with self-similar endspace and different from the Loch Ness monster, we let $X$ be the surface constructed from $S$ and $G^\infty$ as in Section \ref{sec:infinitegroup} and $X_\psi:=X/K_{\psi}$. Note that $K_\psi$ has infinite index in $G^\infty$ for every $\psi$. This implies that $X_\psi$ contains infinitely many vertex surfaces, corresponding to the left cosets of $K_\psi$ in $G^\infty$. Then the same argument used by Aougab, Patel and Vlamis to prove that $X$ is homeomorphic to $S$ (\cite[Lemma 4.8]{apv_isometry}) shows that each $X_\psi$ is homeomorphic to $S$ as well.

We now add some extra assumptions to be able to show that the (uncountable) family $$\{X_\psi\st \psi\in\{0,1\}^\N\}$$ satisfies all the conditions in the theorem:
\begin{enumerate}
\item we require that, for every $j\in\{1,2,3,4\}$, $i,m\in\N$, $\mathcal{P}$ contains:
\begin{itemize}
\item a curve $q_{i,j}$ forming a pair of pants with $\partial(\iota_i(h_j),2i)$ and $\partial(\iota_i(h_j),2i-1)$;
\item a curve $r_{i}$ forming a pair of pants with $q_{i,1}$ and $q_{i,2}$ and a curve $s_{i}$ forming a pair of pants with $q_{i,3}$ and $q_{i,4}$;
\end{itemize}
\item we choose $\lambda$, $\mu$ and the lengths of the curves in $\mathcal{P}$ so that $\Lambda\cup P\cup M=\left\{\frac{\arcsinh(1)}{2^{m^2}}\;\middle|\; m\in \N\right\}$.
\end{enumerate}
We call a geodesic \emph{short} if it has length less than $\arcsinh(1)$.

\begin{rmk}
If we only want to construct an uncountable isospectral family, forgetting about getting an infinite-dimensional family and about the quasiconformality statement, we can impose less strict conditions on the curves in $\mathcal{P}$ (similarly to what we do in the proof of Theorem \ref{thm:largefamilies}). Moreover, condition $(2)$ is necessary only to show the quasiconformality statement.
\end{rmk}

The same argument as in the proof of Theorem \ref{thm:largefamilies} shows that the $X_\psi$ are pairwise noisometric.

As mentioned in the introduction, we cannot directly apply Theorem \ref{thm:sunada} to prove isospectrality, but we will use the technique of \emph{transplantation of geodesics} (\cite{buser_isospectral},\cite{berard_transplantation}). We will divide the collection of primitive closed geodesics into (pairwise disjoint) subsets of curves of the same \emph{type} and show that the cardinality of the set of curves of some type in $X_{\psi_1}$ is the same as the cardinality of the set of curves of the same type in $X_{\psi_2}$. Curves of the same type will have by construction the same length, so this will give us a length-preserving bijection between the set of primitive closed geodesics of $X_{\psi_1}$ and the set of primitive closed geodesics of $X_{\psi_2}$, showing isospectrality.

To simplify the notation, we will show that $X_1:=X_{\psi_1}$ and $X_2:=X_{\psi_2}$ are isospectral, where 
$$\psi_1(i)=\left\{\begin{array}{ll}
1 & \mbox{if } i=1\\
2 & \mbox{otherwise}
\end{array}
\right.$$
and
$$\psi_2(i)=\left\{\begin{array}{ll}
1 & \mbox{if } i=2\\
2 & \mbox{otherwise}
\end{array}
\right.$$
and $K_i:=K_{\psi_i}$. The same proof holds for any pair of surfaces.

Denote by $\mathcal{B}$ the multicurve given by the boundary components of the vertex surfaces. Let $c$ be a primitive closed geodesic in $X_1$. We define curves \emph{of type $c$} as follows:
\begin{description}
\item[Case 1] $c$ is a boundary curve of a vertex surface. Then we say that a primitive closed geodesic is \emph{of type $c$} if it is a boundary curve of a vertex surface and has the same length as $c$.
\item[Case 2] $c$ has length in $M$. Then a primitive closed geodesic is \emph{of type $c$} if it has the same length as $c$.
\item[Case 3] $c$ is contained in the interior of an edge surface isomorphic to $E(h,2m)$ and does not have length in $M$. We say that a primitive closed geodesic is \emph{of type $c$} if it is a copy of $c$ in an edge surface isometric to $E(h,2m)$.
\item[Case 4] $c$ is contained in the interior of a vertex surface. Then a primitive closed geodesic is of type $c$ if it is a copy of $c$ in another vertex surface.
\item[Case 5] $c$ is not as in any previous case. Then we can parametrize $c:S^1\to X_1$ and write $S^1$ as a union of closed intervals $I_1=[\theta_1,\theta_2],\dots, I_{2n}=[\theta_{2n},\theta_1]$ with disjoint interiors such that $c(\theta_l)$ belongs to $\mathcal{B}$ and $c(\mbox{int}(I_l))$ is disjoint from $\mathcal{B}$ (see Figure \ref{fig:typec}). Furthermore, we require that $c(I_{2l-1})\subset V_{K_1g_l}$ and $c(I_{2l})\subset E_{2l}$, where $E_{2l}$ is an edge surface isometric to $E(h_l,2m_l)$. We say that a primitive closed geodesic is \emph{of type $c$} if it has a similar decomposition, that is $d$ can be parametrized as $c:S^1\to X_i$, for $i=1,2$, and we can write $S^1$ as a union of closed intervals $I_1'=[\theta_1',\theta_n'],\dots, I_{2n}'=[\theta_{2n}',\theta_1']$ with disjoint interiors such that $d(\theta_l')\in\mathcal{B}$, $c(\mbox{int}(I_l'))\cap\mathcal{B}=\emptyset$, $d(I'_{2l-1})$ is a copy of $c(I_{2l-1})$ contained in a vertex surface and $d(I_{2l}')$ is a copy of $c(I_{2l})$ contained in an edge surface.
\end{description}

\begin{figure}[h]
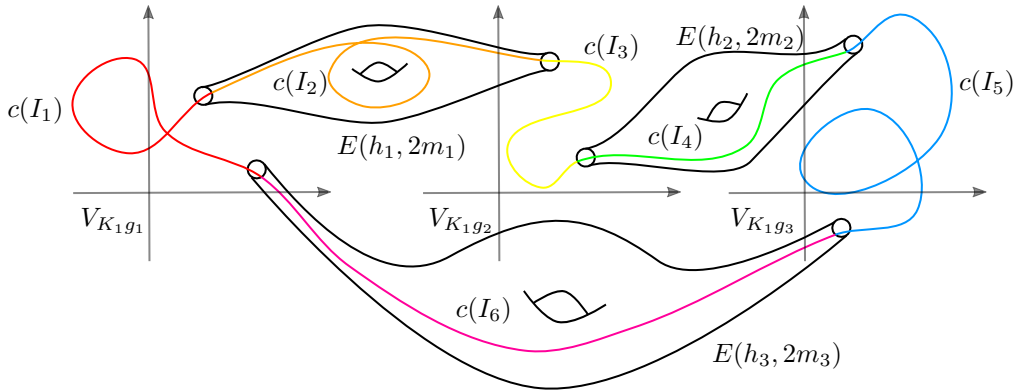

\begin{center}
\begin{overpic}{typec}
\put(-7,30){$c(I_1)$}
\put(22,33){$c(I_2)$}
\put(56,37){$c(I_3)$}
\put(63,27){$c(I_4)$}
\put(97,33){$c(I_5)$}
\put(42,8){$c(I_6)$}
\put(1,18){$V_{K_1g_1}$}
\put(39,18){$V_{K_1g_2}$}
\put(72,18){$V_{K_1g_3}$}
\put(29,26){$E(h_1,2m_1)$}
\put(66,38){$E(h_2,2m_2)$}
\put(70,3){$E(h_3,2m_3)$}
\end{overpic}
\caption{A schematic picture of a decomposition of a curve as in case 5}\label{fig:typec}
\end{center}
\end{figure}

As mentioned before, by construction curves of the same type have the same length. Moreover:
\begin{description}
\item[Case 1] there is exactly one curve of type $c$ in each vertex surface of $X_1$ and in each vertex surface of $X_2$ and both $X_1$ and $X_2$ have countably infinitely many vertex surfaces. So there are countably infinitely many curves of type $c$ in $X_1$ and in $X_2$.
\item[Case 2] a curve has length in $M$ if and only if it is one of the two curves in the pants decomposition of some copy of $E(h,2m)$. Since both $X_1$ and $X_2$ have countably infinitely many edge surface isometric to $E(h,2m)$s, there are countably infinitely many curves of type $c$ in both $X_1$ and $X_2$.
\item[Case 3] curves of type $c$ are in bijection with the edge surfaces isometric to $E(h,2m)$ and again there are countably infinitely many of these in both $X_1$ and $X_2$.
\item[Case 4] curves of type $c$ are in bijection with the vertex surfaces, of which there are countably infinitely many in both $X_1$ and $X_2$.
\end{description}
The last case is the one that requires some more attention (and that relies on the fact that $H_1$ and $H_2$ are almost conjugate).

Assume that $c$ is as in case 5. By construction of $X_1$, since $c$ is a closed curve we need to have
$$K_1g_1h_1\dots h_n=K_1g_1.$$
Moreover, given $c$ and any vertex surface $V_{K_ig}\subset X_i$, for $i=1$ or $2$, we can construct a geodesic obtained by starting with a copy of $c(I_1)$ in the given vertex surface, followed by a copy of $C(I_2)$ in the edge surface glued to the boundary component containing $c(\theta_2)$ (i.e.\ the only possible choice to get a connected path) and so on. We say that such a geodesic is of type $c$. Note that in general such a geodesic is not necessarily closed nor primitive.

Given a group $\Gamma$, a subgroup $\Lambda$ and an element $\gamma\in\Gamma$, we denote by $F(\Lambda,\gamma)$ the collection
$$F(\Lambda,\gamma):=\{\Lambda\delta\st\delta\in\Gamma, \Lambda\delta\gamma=\Lambda\delta\}.$$

A key fact that we will use is the following (see \cite[Proposition 6]{berard_transplantation}):
\begin{lemma}\label{lem:almostconjugate}
Two subgroups $\Lambda_1$ and $\Lambda_2$ of a finite group $\Gamma$ are almost conjugate if and only if for every $\gamma\in\Gamma$
$$|F(\Lambda_1,\gamma)|=|F(\Lambda_2,\gamma)|.$$
\end{lemma}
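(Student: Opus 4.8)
The plan is to funnel both sides of the equivalence through a single counting formula that expresses $|F(\Lambda,\gamma)|$ explicitly in terms of $|[\gamma]\cap\Lambda|$. The starting observation is that the defining condition $\Lambda\delta\gamma=\Lambda\delta$ is equivalent to $\delta\gamma\delta^{-1}\in\Lambda$, so $F(\Lambda,\gamma)$ is exactly the set of right cosets $\Lambda\delta$ fixed by right multiplication by $\gamma$; equivalently, $|F(\Lambda,\gamma)|$ is the value at $\gamma$ of the permutation character of $\Gamma$ acting on $\Lambda\backslash\Gamma$. Rather than invoke the representation-theoretic identification of this character with $\mathrm{Ind}_\Lambda^\Gamma\mathbf 1$, I would keep the argument elementary and self-contained via double counting.

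First I would check that the condition $\delta\gamma\delta^{-1}\in\Lambda$ depends only on the coset $\Lambda\delta$: if $\delta$ is replaced by $\lambda\delta$ with $\lambda\in\Lambda$, then $\delta\gamma\delta^{-1}$ is conjugated by $\lambda$, hence stays in $\Lambda$. Consequently the number of \emph{elements} $\delta\in\Gamma$ with $\delta\gamma\delta^{-1}\in\Lambda$ equals $|\Lambda|\cdot|F(\Lambda,\gamma)|$. Next I would count this set of elements by grouping them according to the value $y:=\delta\gamma\delta^{-1}$. As $\delta$ varies, $y$ ranges precisely over $[\gamma]\cap\Lambda$, and for each fixed such $y$ the solutions $\delta$ of $\delta\gamma\delta^{-1}=y$ form a single coset of the centralizer $C_\Gamma(\gamma)$, hence number $|C_\Gamma(\gamma)|$. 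Combining the two counts yields the key identity
$$|F(\Lambda,\gamma)|=\frac{|C_\Gamma(\gamma)|}{|\Lambda|}\,\bigl|[\gamma]\cap\Lambda\bigr|.$$

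With this formula the equivalence is immediate in both directions, the only subtlety being to control the factor $|\Lambda|$. For the forward implication, summing $|[g]\cap\Lambda|$ over a set of conjugacy-class representatives of $\Gamma$ recovers $|\Lambda|$, so almost conjugate subgroups automatically have equal order; the formula then gives $|F(\Lambda_1,\gamma)|=|F(\Lambda_2,\gamma)|$ for every $\gamma$. For the converse, evaluating at $\gamma=e$ gives $|F(\Lambda_i,e)|=[\Gamma:\Lambda_i]$, so equality of the $F$-counts forces $|\Lambda_1|=|\Lambda_2|$; since $|C_\Gamma(\gamma)|>0$, cancelling the common factor $|C_\Gamma(\gamma)|/|\Lambda_i|$ yields $|[\gamma]\cap\Lambda_1|=|[\gamma]\cap\Lambda_2|$ for all $\gamma$, which is almost conjugacy. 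The main obstacle is establishing the counting identity cleanly — in particular verifying that the fixing condition descends to cosets and organising the orbit-by-orbit count over $C_\Gamma(\gamma)$ — after which both implications reduce to the bookkeeping that $|\Lambda_1|=|\Lambda_2|$, a fact that is visible on each side separately.
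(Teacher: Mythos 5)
Your proof is correct and complete: the coset-independence of the condition $\delta\gamma\delta^{-1}\in\Lambda$, the double count giving $|F(\Lambda,\gamma)|=\frac{|C_\Gamma(\gamma)|}{|\Lambda|}\,|[\gamma]\cap\Lambda|$, and the two bookkeeping steps showing $|\Lambda_1|=|\Lambda_2|$ on each side (by summing over conjugacy classes in one direction, by evaluating at $\gamma=e$ in the other) all check out. Note that the paper does not prove this lemma at all --- it simply cites B\'erard's Proposition 6 --- so your write-up supplies an argument the paper leaves to the literature. What you have done is essentially the standard proof of that cited fact: $|F(\Lambda,\gamma)|$ is the permutation character of $\Gamma$ acting on $\Lambda\backslash\Gamma$ evaluated at $\gamma$, and the classical statement is that almost conjugacy of $\Lambda_1,\Lambda_2$ is equivalent to these two permutation characters coinciding. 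Your elementary double-counting derivation of the key identity avoids any representation theory and is self-contained, which is a reasonable trade-off; the character-theoretic phrasing buys nothing extra for the application in this paper.
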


With this lemma we can prove a more precise statement:
\begin{lemma}\label{lem:almostconjugate+}
Suppose $\Lambda_1$ and $\Lambda_2$ are almost conjugate subgroups of a finite group $\Gamma$. Then for every $n\in \N$ and every $\gamma_1,\dots,\gamma_n\in \Gamma$ there is a bijection $\rho:F(\Lambda_1,\gamma_1\dots\gamma_n)\to F(\Lambda_2,\gamma_1\dots\gamma_n)$ such that the following holds: for every $p$ which divides $n$ and such that $\gamma_i=\gamma_j$ if $i\equiv j$ modulo $\frac{n}{p}$, $\Lambda_1\gamma\in F(\Lambda_1,\gamma_1\dots\gamma_{n/p})$ if and only if $\rho(\Lambda_2\gamma)\in F(\Lambda_2,\gamma_1\dots\gamma_{n/p})$.
\end{lemma}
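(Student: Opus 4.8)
The plan is to realize $\rho$ as a union of arbitrary bijections between ``blocks'' of $F(\Lambda_1,\gamma_1\cdots\gamma_n)$ and $F(\Lambda_2,\gamma_1\cdots\gamma_n)$, where each block records exactly which of the conditions $\Lambda\delta\,\gamma_1\cdots\gamma_{n/p}=\Lambda\delta$ a given coset satisfies. Write $w=\gamma_1\cdots\gamma_n$ and, for each divisor $d\mid n$ for which the tuple is $d$-periodic (i.e.\ $\gamma_i=\gamma_j$ whenever $i\equiv j\pmod d$ --- these are exactly the admissible values $d=n/p$), set $u_d=\gamma_1\cdots\gamma_d$. Periodicity gives $u_{d'}=u_d^{\,d'/d}$ whenever $d\mid d'$ are both admissible, and in particular $w=u_d^{\,n/d}$. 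First I would record that $\Lambda\delta\in F(\Lambda,u_d)$ means $\delta u_d\delta^{-1}\in\Lambda$, whence $\delta u_{d'}\delta^{-1}=(\delta u_d\delta^{-1})^{d'/d}\in\Lambda$; so $F(\Lambda,u_d)\subseteq F(\Lambda,u_{d'})$ for $d\mid d'$, and all the $F(\Lambda,u_d)$ sit inside $F(\Lambda,w)$.

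The crux is the intersection formula $F(\Lambda,u_{d_1})\cap F(\Lambda,u_{d_2})=F(\Lambda,u_{\gcd(d_1,d_2)})$. The inclusion ``$\supseteq$'' is the previous step; for ``$\subseteq$'', setting $b:=\delta u_{\gcd}\delta^{-1}$ we have $b^{d_1/\gcd},b^{d_2/\gcd}\in\Lambda$ with $\gcd(d_1/\gcd,d_2/\gcd)=1$, so a B\'ezout combination of these exponents puts $b\in\Lambda$. This needs $\gcd(d_1,d_2)$ to be admissible, which holds because the shifts fixing the cyclic tuple form a subgroup of $\Z/n\Z$: the admissible $d$ are precisely the divisors of $n$ that are multiples of the minimal period $d_0$, a family closed under $\gcd$. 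Consequently, for each coset $\Lambda\delta\in F(\Lambda,w)$ the set of admissible $d$ with $\Lambda\delta\in F(\Lambda,u_d)$ is closed under $\gcd$ and upward under divisibility, hence has a unique minimal element $d_{\min}(\Lambda\delta)$, and $\Lambda\delta\in F(\Lambda,u_d)\iff d_{\min}(\Lambda\delta)\mid d$.

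This reduces everything to a counting identity. Let $f_i(d^\ast)$ be the number of cosets in $F(\Lambda_i,w)$ with $d_{\min}=d^\ast$, and $g_i(d)=|F(\Lambda_i,u_d)|$; then $g_i(d)=\sum_{d^\ast\mid d}f_i(d^\ast)$, the sum ranging over admissible $d^\ast$. Lemma \ref{lem:almostconjugate} applied to the single element $u_d$ gives $g_1(d)=g_2(d)$ for every admissible $d$, and since the poset of admissible periods depends only on the tuple $(\gamma_1,\dots,\gamma_n)$ and not on $\Lambda_i$, Möbius inversion over this poset yields $f_1(d^\ast)=f_2(d^\ast)$ for all $d^\ast$. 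Choosing for each admissible $d^\ast$ any bijection between the equally many cosets of type $d^\ast$ on the two sides and taking the union defines $\rho$; by construction it preserves $d_{\min}$, hence membership in every $F(\cdot,u_d)$, which is exactly the asserted equivalence (with $\rho(\Lambda_1\gamma)$ in the codomain, correcting the evident typo $\rho(\Lambda_2\gamma)$).

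I expect the main obstacle to be the intersection formula together with the closure of the admissible periods under $\gcd$: this is the one place where the group-theoretic condition $\delta u_d\delta^{-1}\in\Lambda$ must genuinely interact with the combinatorics of the period lattice. Once that is in place, the nested/meet-semilattice structure of the $F(\Lambda,u_d)$ makes the passage to $d_{\min}$ and the final Möbius-inversion bookkeeping routine.
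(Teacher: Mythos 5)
Your proof is correct, and it follows the same basic strategy as the paper's: apply Lemma \ref{lem:almostconjugate} to the partial products $u_d=\gamma_1\cdots\gamma_d$ for each admissible period $d=n/p$, understand how the sets $F(\Lambda,u_d)$ intersect, and assemble $\rho$ from bijections between corresponding pieces. The difference lies in the assembly. The paper orders the admissible divisors and extends a bijection step by step across the union of the $F(\Lambda_i,u_{n/p_t})$, using the same intersection identity you prove (note that the paper's index $q_s=\gcd(p_s,p_{s-1})$ should be the least common multiple: the intersection occurs at the $\gcd$ of the word lengths $n/p$, exactly as in your formula); as written, that extension step also tacitly requires the previously chosen bijection to respect each new intersection. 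Your version replaces the iterative extension by a partition of $F(\Lambda_i,w)$ into blocks according to the minimal period $d_{\min}$, together with a M\"obius-inversion count showing corresponding blocks have equal size; this handles all intersections simultaneously and is the cleaner way to make the argument airtight. You also supply two details the paper leaves implicit: the B\'ezout argument for $F(\Lambda,u_{d_1})\cap F(\Lambda,u_{d_2})=F(\Lambda,u_{\gcd(d_1,d_2)})$, and the closure of the set of admissible periods under $\gcd$ (needed so that $u_{\gcd(d_1,d_2)}$ is again a partial product of the required form). Your reading of $\rho(\Lambda_2\gamma)$ as a typo for $\rho(\Lambda_1\gamma)$ is the intended one.
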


\begin{proof}
Note first that if $p$ satisfies the conditions in the statement, $F(\Lambda_i,\gamma_1\dots\gamma_{n/p})\subset F(\Lambda_i,\gamma_1\dots\gamma)n)$.

Let $p_1<\dots<p_s$ be the integers dividing $n$ such that $\gamma_i=\gamma_j$ if $i\equiv j$ modulo $\frac{n}{p_t}$, for every $t$. By Lemma \ref{lem:almostconjugate}, $|F(\Lambda_1,\gamma_1\dots\gamma_{n/p_s})|=|F(\Lambda_2,\gamma_1\dots\gamma_{n/p_s})|$, so we can choose a bijection between the two sets. Next look at $F(\Lambda_1,\gamma_1\dots\gamma_{n/p_{s-1}})$ and $F(\Lambda_2,\gamma_1\dots\gamma_{n/p_{s-1}})$: they are also in bijection and so are $$F(\Lambda_1,\gamma_1\dots\gamma_{n/p_{s-1}})\cap F(\Lambda_1,\gamma_1\dots\gamma_{n/p_{s}})=F(\Lambda_1,\gamma_1\dots\gamma_{n/q_s}),$$
where $q_s$ is the greatest common divisor of $p_s$ and $p_{s-1}$, and $$F(\Lambda_2,\gamma_1\dots\gamma_{n/p_{s-1}})\cap F(\Lambda_2,\gamma_1\dots\gamma_{n/p_{s}})=F(\Lambda_2,\gamma_1\dots\gamma{n/q_s}).$$
So we can extend the bijection between $F(\Lambda_1,\gamma_1\dots\gamma_{n/p_s})$ and $F(\Lambda_2,\gamma_1\dots\gamma_{n/p_s})$ to a bijection between $F(\Lambda_1,\gamma_1\dots\gamma_{n/p_s})\cup F(\Lambda_1,\gamma_1\dots\gamma_{n/p_{s-1}})$ and $F(\Lambda_2,\gamma_1\dots\gamma_{n/p_s})\cup F(\Lambda_2,\gamma_1\dots\gamma_{n/p_{s-1}})$. We can repeat the argument to get the desired bijection $F(\Lambda_1,\gamma_1\dots\gamma_n)\to F(\Lambda_2,\gamma_1\dots\gamma_n)$.
\end{proof}

Next we look at $G^\infty$:
\begin{lemma}\label{lem:closed&primitive}
For any $g_1,\dots, g_n\in G^\infty$, there is a bijection $\rho:F(K_1,g_1\dots g_n)\to F(K_2,g_1\dots g_n)$ such that the following holds: for every $p$ which divides $n$ and such that $g_i=g_j$ if $i\equiv j$ modulo $\frac{n}{p}$, $K_1\gamma\in F(K_1,g_1\dots g_{n/p})$ if and only if $\rho(K_1\gamma)\in F(K_2,g_1\dots g_{n/p})$.
\end{lemma}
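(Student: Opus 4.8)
The plan is to exploit the product structure of $G^\infty$ and reduce everything to a coordinate-wise application of Lemma \ref{lem:almostconjugate+}. For $g\in G^\infty$ write $g^{(k)}\in G$ for its $k$-th entry. Since multiplication, conjugation, and the subgroups $K_i=\prod_k H_{\psi_i(k)}$ are all defined coordinate-wise, the right coset space $K_i\backslash G^\infty$ decomposes as $\prod_k\left(H_{\psi_i(k)}\backslash G\right)$, with a coset $K_i\delta$ corresponding to the sequence $\left(H_{\psi_i(k)}\delta^{(k)}\right)_k$. Moreover $\delta g\delta^{-1}\in K_i$ holds if and only if $\delta^{(k)}g^{(k)}(\delta^{(k)})^{-1}\in H_{\psi_i(k)}$ for every $k$, so one obtains the factorization
$$F(K_i,g)=\prod_k F\left(H_{\psi_i(k)},g^{(k)}\right),$$
and similarly for each partial product $g_1\dots g_{n/p}$, whose $k$-th coordinate is $g_1^{(k)}\dots g_{n/p}^{(k)}$.

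First I would treat the coordinates on which $K_1$ and $K_2$ agree. With $\psi_1,\psi_2$ as fixed above, $\psi_1(k)=\psi_2(k)=2$ for all $k\geq 3$, so on those coordinates the two $F$-factors coincide and I take $\rho_k=\mathrm{id}$. On the two remaining coordinates the pair $\left(H_{\psi_1(k)},H_{\psi_2(k)}\right)$ is $(H_1,H_2)$ for $k=1$ and $(H_2,H_1)$ for $k=2$; in both cases the subgroups are almost conjugate by Example \ref{ex:almostconjugate}, so I would apply Lemma \ref{lem:almostconjugate+} to the finite group $G$ with the sequence $\gamma_l:=g_l^{(k)}$, producing a bijection $\rho_k\colon F\left(H_{\psi_1(k)},g_1^{(k)}\dots g_n^{(k)}\right)\to F\left(H_{\psi_2(k)},g_1^{(k)}\dots g_n^{(k)}\right)$ compatible with all the relevant nested subsets. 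The desired map is then the product $\rho:=\prod_k\rho_k$, which under the factorizations above is a bijection $F(K_1,g_1\dots g_n)\to F(K_2,g_1\dots g_n)$; it is well defined on cosets precisely because of the coordinate-wise description of $K_i\backslash G^\infty$.

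It then remains to verify compatibility with the subsets $F(\cdot,g_1\dots g_{n/p})$. Fix $p\mid n$ with $g_i=g_j$ whenever $i\equiv j$ modulo $\tfrac{n}{p}$. Equality of elements of $G^\infty$ is equality in every coordinate, so for each $k$ one has $g_i^{(k)}=g_j^{(k)}$ whenever $i\equiv j$ modulo $\tfrac{n}{p}$; hence this same $p$ is admissible for the application of Lemma \ref{lem:almostconjugate+} in every coordinate. Under the factorization a coset lies in $F(K_1,g_1\dots g_{n/p})$ exactly when each of its coordinates lies in the corresponding factor $F\left(H_{\psi_1(k)},g_1^{(k)}\dots g_{n/p}^{(k)}\right)$, and the defining property of each $\rho_k$ (trivial for $k\geq 3$, and Lemma \ref{lem:almostconjugate+} for $k=1,2$) turns this coordinate-wise membership into membership of $\rho$ of the coset in $F(K_2,g_1\dots g_{n/p})$, yielding the claimed equivalence.

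The only genuine subtlety is bookkeeping rather than mathematics: one must check that every $p$ admissible for the global sequence $(g_l)$ is admissible for each coordinate sequence $(g_l^{(k)})$ — which holds because the globally admissible $p$ form the intersection over $k$ of the coordinate-wise admissible sets, so each $\rho_k$ automatically respects the subsets indexed by the globally admissible $p$. Since $K_1$ and $K_2$ differ in only finitely many (indeed two) coordinates, no convergence issue arises in forming the infinite product, and the whole argument rests on the already-established finite-group statement.
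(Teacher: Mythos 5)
Your proposal is correct and follows essentially the same route as the paper's (much terser) proof: decompose the coset space of $K_i$ in $G^\infty$ coordinate-wise, take the identity on every coordinate where $\psi_1$ and $\psi_2$ agree, and apply Lemma \ref{lem:almostconjugate+} on the finitely many coordinates where they differ, checking that a globally admissible $p$ is admissible in each coordinate. The paper states this as ``it is not difficult to show''; your write-up supplies exactly the verification it leaves implicit.
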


\begin{proof}
Let $$\rho_1:F(H_1,(g_1\dots g_N)_1)\to F(H_2,(g_1\dots g_N)_1)$$ and $$\rho_2:F(H_2,(g_1\dots g_N)_2)\to F(H_1,(g_1\dots g_N)_2)$$ be bijections as in Lemma \ref{lem:almostconjugate+}. Then it is not difficult to show that
\begin{align*}
\rho:F(K_1,g_1\dots g_n)&\to F(K_2,g_1\dots g_n)\\
(H_1r_1, H_2 r_2, H_2r_3,\dots)&\mapsto (\rho_1(H_1r_1), \rho_2(H_2r_2),H_2r_3,\dots)
\end{align*}
is a bijection satisfying the requirements in the statement.
\end{proof}

Now, if $d$ is a geodesic of type $c$ starting from a vertex surface $V_{K_ig}$, then
\begin{itemize}
\item $d$ is closed if and only if $K_ig\in F(K_i,h_1\dots h_n)$,
\item assuming it is closed, $d$ is the $p$-fold iterate of a closed geodesic if and only if $p$ divides $n$, $c_{2i}=c_{2j}$ and $c_{2i-1}=c_{2j-1}$ if $i\equiv j$ modulo $n/p$, $h_i=h_j$ if $i\equiv j$ modulo $n/p$ and $K_ig\in F(K_i,g_1\dots g_{n/p})$.
\end{itemize}
Furthermore, multiple starting vertex surfaces yield the same curve if and only if there is some $q$ dividing $n$ such that:
\begin{itemize}
\item $c_{2i}$ is a copy of $c_{2j}$ if $i\equiv j$ modulo $q$,
\item $c_{2i-1}$ is a copy of $c_{2j-1}$ if $i\equiv j$ modulo $q$.
\end{itemize}
In particular the number of different vertex surfaces yielding the same primitive closed geodesic $d$ of type $c$ does not depend on $d$ -- denote this multiplicity by $N$. Let $\mathcal{N}$ be the collection of all $p$ dividing $n$ such that $c_{2i}=c_{2j}$ and $c_{2i-1}=c_{2j-1}$ if $i\equiv j$ modulo $n/p$, $h_i=h_j$ if $i\equiv j$ modulo $n/p$ and $K_ig\in F(K_i,g_1\dots g_{n/p})$.

We have shown:
\begin{lemma}
There is an $N$-to-$1$ map
$$F(K_i,g_1\dots g_n)\ssm \bigcup_{p\in\mathcal{N}}F(K_i,g_1\dots g_{n/p})\to \{\mbox{primitive closed geodesics of type $c$ in $X_i$}\}.$$
\end{lemma}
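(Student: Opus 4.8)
The plan is to assemble the three facts established just above the statement into the asserted $N$-to-$1$ count; once those are in hand the lemma is essentially bookkeeping. I would define the map by sending a coset $K_ig$ in the domain to the geodesic of type $c$ obtained by starting in the vertex surface $V_{K_ig}$ and following the combinatorial pattern of $c$. There are then three things to verify: that this geodesic is closed and primitive (so the map is well defined into the stated target), that every primitive closed geodesic of type $c$ arises this way (surjectivity), and that each such geodesic has exactly $N$ preimages.

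For well-definedness I would invoke the two bullet points preceding the statement: the geodesic traced from $V_{K_ig}$ is closed precisely when $K_ig\in F(K_i,g_1\dots g_n)$, and a closed such geodesic is the $p$-fold iterate of a shorter one exactly when $K_ig\in F(K_i,g_1\dots g_{n/p})$ for some $p\in\mathcal{N}$ (recall that for such $p$ one has $F(K_i,g_1\dots g_{n/p})\subset F(K_i,g_1\dots g_n)$). Hence removing $\bigcup_{p\in\mathcal{N}}F(K_i,g_1\dots g_{n/p})$ from $F(K_i,g_1\dots g_n)$ leaves exactly those starting cosets whose traced geodesic is closed and primitive, so the map indeed lands in the set of primitive closed geodesics of type $c$. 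Surjectivity is then immediate: given such a $d$, any vertex surface $V_{K_ig}$ that $d$ meets provides a starting coset tracing $d$, and since $d$ is closed and primitive this coset lies in the domain.

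The heart of the argument, and the step I expect to require the most care, is the fiber count. I would show that the starting cosets tracing a fixed primitive $d$ are precisely those obtained by shifting the starting vertex along $d$ by a multiple of the smallest period $q$ of the unlabeled pattern $(c_{2i-1},c_{2i})$, namely the cosets $K_ig,\ K_ig(g_1\cdots g_q),\ K_ig(g_1\cdots g_{2q}),\dots$, of which there are $n/q$. Primitivity of $d$ forces these $n/q$ cosets to be pairwise distinct, since any coincidence among them would exhibit $d$ as a proper iterate; thus each fiber has exactly $N=n/q$ elements, and crucially $q$ depends only on the combinatorial type of $c$ and not on the coset, which is what makes $N$ independent of $d$ and justifies the notation. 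The remaining subtleties are to confirm that shifts by non-multiples of $q$ fail to reproduce the labeled pattern of $c$ (so they do not trace $d$ as a type-$c$ geodesic) --- this is exactly the ``same curve'' criterion recalled above --- and that every one of the $N$ shifted cosets indeed lies in the domain rather than in the removed union, which again follows from primitivity together with the period-$q$ periodicity of the labels.
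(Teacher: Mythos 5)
Your proposal is correct and follows essentially the same route as the paper, which presents this lemma as a summary (``We have shown:'') of exactly the three observations you assemble: the closedness criterion, the iterate criterion defining the removed union, and the period-$q$ shift analysis for the fiber. The only substantive addition is that you make explicit the identification $N=n/q$ and the argument that primitivity forces the $n/q$ shifted starting cosets to be pairwise distinct and to avoid the removed union --- points the paper leaves implicit --- and this added care is sound.
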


Since $F(K_1,g_1\dots g_n)\ssm \bigcup_{p\in\mathcal{N}}F(K_1,g_1\dots g_{n/p})$ and $F(K_2,g_1\dots g_n)\ssm \bigcup_{p\in\mathcal{N}}F(K_2,g_1\dots g_{n/p})$ are in bijection by Lemma \ref{lem:closed&primitive}, we have a bijection between the set of primitive closed geodesics of type $c$ on $X_1$ and the set of primitive closed geodesics of type $c$ on $X_2$. This concludes the proof of isospectrality of $X_1$ and $X_2$.

Next we want to show that there is an uncountable family in $\left\{X_\psi\;\middle| \;\psi\in\{1,2\}^\N\right\}$ of pairwise not quasiconformal hyperbolic surfaces. We will use the following lemma.

\begin{lemma}
Let $\mathcal{F}$ be a maximal (with respect to inclusion) collection of functions $\psi:\N\to\{1,2\}$ such that for every $\psi_1,\psi_2\in\mathcal{F}$, $\psi_1\neq \psi_2$, there is an infinite sequence $\{i_k\}_{k\in\N}\subset\N$ such that $\psi_1(i_k)\neq \psi_2(i_k)$ for every $k$. Then $\mathcal{F}$ is uncountable.
\end{lemma}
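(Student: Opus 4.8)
The plan is to show that any such maximal family $\mathcal{F}$ must be uncountable by a diagonal/counting argument: I will argue that a countable family cannot be maximal, so maximality forces uncountability. First I would set up the relevant notion of "closeness": call two functions $\psi_1, \psi_2 : \N \to \{1,2\}$ \emph{eventually equal} if they differ in only finitely many places, i.e.\ if there is no infinite sequence $\{i_k\}$ with $\psi_1(i_k) \neq \psi_2(i_k)$. Thus the defining condition on $\mathcal{F}$ says precisely that no two distinct elements of $\mathcal{F}$ are eventually equal, i.e.\ $\mathcal{F}$ is a set of pairwise not-eventually-equal functions. Since eventual equality is an equivalence relation on $\{1,2\}^\N$, the condition says that $\mathcal{F}$ contains at most one representative from each equivalence class, and maximality says it contains \emph{exactly} one from each class (otherwise a missing class could be added). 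Hence $|\mathcal{F}|$ equals the number of equivalence classes.

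The key step is then to count these classes. Each equivalence class is the set of functions agreeing with a fixed $\psi$ off a finite set, together with an arbitrary choice of values on that finite set; thus each class is countable (a countable union over finite subsets $F \subset \N$ of the finitely many modifications on $F$). Since $\{1,2\}^\N$ is uncountable, having cardinality $2^{\aleph_0}$, and it is partitioned into countable classes, there must be uncountably many classes. Concretely, if there were only countably many classes, then $\{1,2\}^\N$ would be a countable union of countable sets, hence countable, contradicting its uncountability. Therefore $|\mathcal{F}| = $ (number of classes) is uncountable.

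To verify maximality forces the full count, I would check directly: if $\mathcal{F}$ omitted some equivalence class $C$, then picking any $\chi \in C$, the function $\chi$ is not eventually equal to any element of $\mathcal{F}$ (those lie in other classes), so $\mathcal{F} \cup \{\chi\}$ would still satisfy the pairwise condition, contradicting maximality. Thus $\mathcal{F}$ meets every class, and the surjection $\mathcal{F} \to (\{1,2\}^\N/\!\sim)$ sending each $\psi$ to its class is a bijection, giving the cardinality count above.

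The main obstacle, such as it is, is purely the cardinality bookkeeping: one must be careful that each equivalence class is genuinely countable (this uses that there are only countably many finite subsets of $\N$ and finitely many values to assign on each) and that an uncountable set cannot be a countable union of countable sets. These are standard facts, so the argument is routine once the reduction to counting equivalence classes of the eventual-equality relation is made; the only place to be attentive is phrasing the maximality argument so that it correctly identifies $\mathcal{F}$ with a transversal of the partition.
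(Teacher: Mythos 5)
Your proof is correct, but it takes a genuinely different route from the paper's. The paper argues by direct diagonalization: assuming $\mathcal{F}=\{\psi_n\}_{n\in\N}$ is countable, it builds a single new function $\psi$ by prescribing $\psi(2^{n-1}(2i-1))\neq\psi_n(2^{n-1}(2i-1))$ for all $n,i$ (using that every positive integer is uniquely an odd number times a power of $2$, so the sets $\{2^{n-1}(2i-1)\st i\in\N\}$ partition $\N$ and $\psi$ is well defined), so that $\psi$ disagrees with each $\psi_n$ on an infinite set, contradicting maximality. You instead observe that the pairwise condition says exactly that $\mathcal{F}$ is a partial transversal for the ``eventually equal'' equivalence relation on $\{1,2\}^{\N}$, that maximality upgrades this to a full transversal, and that since each class is countable (being in bijection with the finite subsets of $\N$ via the set of positions where a function is flipped) while $\{1,2\}^{\N}$ is uncountable, there must be uncountably many classes. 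Both arguments are sound; yours is conceptually cleaner, makes the role of maximality transparent, and in fact identifies $|\mathcal{F}|$ with the cardinality of the quotient (hence with $2^{\aleph_0}$), whereas the paper's is more elementary and entirely explicit, producing a concrete witness to non-maximality without invoking the fact that a countable union of countable sets is countable (a fact which here needs no choice, since the elements of the putative countable $\mathcal{F}$ themselves serve as base points for canonical enumerations of the classes, but which is worth flagging when you use it).
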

\begin{proof}
It is easy to show that $\mathcal{F}$ is infinite. Suppose by contradiction that $\mathcal{F}$ is countable and denote its elements by $\psi_n$, $n\in \N$. Let $\psi:\N\to\{1,2\}$ defined by $$\psi(2^{n-1}(2i-1))\neq \psi_n(2^{n-1}(2i-1))\;\forall n,i\in\N.$$
So for every $n\in \N$, $\psi_n$ and $\psi$ have infinitely many distinct values, contradicting the maximality of $\mathcal{F}$.
\end{proof}

Using this we prove that, for $\mathcal{F}$ a family as in the lemma, the surfaces $\{X_\psi\st\psi\in\mathcal{F}\}$ are pairwise not quasiconformal to each other.

Given $\psi:\N\to\{1,2\}$, we construct the sequence of curves $\gamma_{\psi,i}$ on $X_\psi$ defined as follows:
\begin{itemize}
\item if $\psi(i)=1$, $\gamma_{\psi,i}$ is a simple closed curve intersecting
\begin{itemize}
\item $\partial(\iota_i(h_1),2i),\partial(\iota_i(h_1),2i-1),\partial(\iota_i(h_2),2i),\partial(\iota_i(h_2),2i-1)$, each once,
\item $\mathcal{P}$ only in $q_{1,i}$ and $q_{2,i}$, twice each,
\item $\nu_{\psi}$ twice,
\end{itemize}
and such that $\gamma_{\psi,i}\ssm \nu_{\psi}$ is contained in a single component of $X_\psi\ssm \nu_{\psi}$;
\item if $\psi(i)=2$, $\gamma_{\psi,i}$ is a simple closed curve intersecting
\begin{itemize}
\item $\partial(\iota_i(h_3),2i),\partial(\iota_i(h_3),2i-1),\partial(\iota_i(h_4),2i),\partial(\iota_i(h_4),2i-1)$, each once,
\item $\mathcal{P}$ only in $q_{3,i}$ and $q_{4,i}$, twice each,
\item $\nu_{\psi}$ twice,
\end{itemize}
and such that $\gamma_{\psi,i}\ssm \nu_{\psi}$ is contained in a single component of $X_\psi\ssm \nu_{\psi}$;
\end{itemize}

Suppose by contradiction that there is a $K$-quasiconformal map $\theta:X_{\psi_1}\to X_{\psi_2}$ for $\psi_1\neq\psi_2\in \mathcal{F}$.

The same argument as in the proof of Theorem \ref{thm:largefamilies} shows that the set of lengths of short curves intersected by $\theta(\gamma_{\psi_1,i_k})$ cannot be the same as the set of lengths of short curves intersected by $\gamma_{\psi_1,i_k}$. So for every $k$ there is a short curve $\delta_k$ intersecting $\gamma_{\psi_1,i_k}$ such that $\ell_{X_1}(\delta_k)\neq \ell_{X_2}(\theta(\delta_k))$. Note that by construction $\ell_{X_1}(\delta_k)\to 0$, as $k\to\infty$.

\begin{description}
\item[Case 1] for infinitely many $k$, the geodesic representative of $\theta(\delta_k)$ is long. Since there is some $k$ such that $\ell_{X_1}(\delta_k)=\frac{\arcsinh(1)}{2^{n^2}}$ for some $n>\sqrt{\log_2(K)}$, we have
$$\frac{\ell_{X_2}(\theta(\delta_k))}{\ell_{X_1}(\delta_k)}\geq 2^{2n}>K,$$
a contradiction.
\item[Case 2] for all but finitely many $k$, the geodesic representative of $\theta(\delta_k)$ is short. Since there is some $k$ such that  $\ell_{X_1}(\delta_k)=\frac{\arcsinh(1)}{2^{n^2}}$ for some $n>\frac{\log_2(K)+1}{2}$ and $\ell_{X_2}(\theta(\delta_k))=\frac{\arcsinh(1)}{2^{m^2}}$, for some $m\neq n$, we have
that either $m<n$ and $$\frac{\ell_{X_2}(\theta(\delta_k))}{\ell_{X_1}(\delta_k)}>K$$
or
$m>n$ and
$$\frac{\ell_{X_2}(\theta(\delta_k))}{\ell_{X_1}(\delta_k)}<\frac{1}{K},$$
a contradiction.
\end{description}

So $X_{\psi_1}$ and $X_{\psi_2}$ are not quasiconformal.

To conclude the proof of Theorem \ref{thm:selfsimilar} we just need to show that the surfaces we constructed belong to infinite-dimensional families as required. Since the lengths in $P$ are isolated in the set of lengths of short curves on $X$, we can vary them all slightly to give the required infinite-dimensional family (as in Section \ref{sec:infinitedim}, with extra care to choose small enough intervals so that the proof of not quasiconformality goes through).

\bibliographystyle{alpha}
\bibliography{references}

\end{document}